\newtheorem{thm}{Theorem}[section]
\newtheorem{prop}[thm]{Proposition}
\newtheorem{cor}[thm]{Corollary}
\theoremstyle{definition}
\newtheorem{rem}[thm]{Remark}
\newtheorem{ex}[thm]{Example}
\newcommand{\N}{\mathbb{N}}
\newcommand{\Q}{\mathbb{Q}}
\newcommand{\R}{\mathbb{R}}
\newcommand{\C}{\mathbb{C}}
\newcommand{\T}{\mathbb{T}}
\newcommand{\st}{\;:\;}
\newcommand{\sspace}{\cdot}
\newcommand{\ssspace}{\cdot\cdot}
\newcommand{\X}{X=\left.\Gamma\right\backslash G}
\newcommand{\g}{\mathfrak{g}}
\newcommand{\h}{\mathfrak{h}}
\newcommand{\duale}[1]{{#1}^*}
\newcommand{\kth}[1]{\ifthenelse{\equal{#1}{1}}{$#1^\text{st}$}{\ifthenelse{\equal{#1}{2}}{$#1^\text{nd}$}{\ifthenelse{\equal{#1}{3}}{$#1^\text{rd}$}{$#1^\text{th}$}}}}
\DeclareMathOperator{\im}{i}
\DeclareMathOperator{\imm}{im}
\DeclareMathOperator{\de}{d}
\newcommand{\del}{\partial}
\newcommand{\delbar}{\overline{\del}}
\renewcommand{\Re}{\mathsf{Re}}
\renewcommand{\Im}{\mathsf{Im}}
\title{Degree of non-K\"ahlerianity for $6$-dimensional nilmanifolds}
\author{Daniele Angella}
\address[Daniele Angella]{Dipartimento di Matematica\\
Universit\`{a} di Pisa \\
Largo Bruno Pontecorvo 5, 56127\\ 
Pisa, Italy}
\email{angella@mail.dm.unipi.it}
\curraddr{Centro di Ricerca Matematica ``Ennio de Giorgi''\\
Collegio Puteano, Scuola Normale Superiore\\
Piazza dei Cavalieri 3\\
56126 Pisa, Italy
}
\email{daniele.angella@sns.it}
\email{daniele.angella@gmail.com}
\author{Maria Giovanna Franzini}
\address[Maria Giovanna Franzini]{Dipartimento Di Matematica\\
Universit\`{a} di Parma \\
Parco Area delle Scienze 53/A, 43124 \\
Parma, Italy}
\email{mariagiovanna.franzini@studenti.unipr.it}
\author{Federico Alberto Rossi}
\address[Federico Alberto Rossi]{Dipartimento di Matematica e Applicazioni\\
Universit\`{a} di Milano Bicocca\\
Via Cozzi 53, 20125 \\
Milano, Italy} 
\email{f.rossi46@campus.unimib.it}
\keywords{nilmanifold; cohomology; Bott-Chern; non-K\"ahler; pluriclosed; SKT}
\thanks{This work was supported by GNSAGA of INdAM.
During the preparation of this work, the first author has been granted with a research fellowship by Istituto Nazionale di Alta Matematica INdAM, and has been supported by the Project PRIN ``Varietà reali e complesse: geometria, topologia e analisi armonica'', by the Project FIRB ``Geometria Differenziale e Teoria Geometrica delle Funzioni'', and by GNSAGA of INdAM.\\\\
\indent To appear in {\em manuscripta mathematica}.
The final publication is available at Springer via \url{http://dx.doi.org/10.1007/s00229-015-0734-x}.
}
\subjclass[2010]{57T15; 32Q57; 53C558}
\begin{document}

\begin{abstract}
 We use Bott-Chern cohomology to measure the non-K\"ahlerianity of $6$-dimensional nilmanifolds endowed with the invariant complex structures in M. Ceballos, A. Otal, L. Ugarte, and R. Villacampa's classification, \cite{ceballos-otal-ugarte-villacampa}. We investigate the existence of pluriclosed metric in connection with such a classification.
\end{abstract}

\maketitle

\section*{Introduction}

The \emph{Bott-Chern cohomology} and the \emph{Aeppli cohomology} provide further tools to investigate the complex geometry of non-K\"ahler manifolds: they are, in a sense, a bridge between the Dolbeault cohomology and the de Rham cohomology of a complex manifold $X$. Introduced by A. Aeppli in \cite{aeppli}, the cohomology groups
$$
H^{\bullet,\bullet}_{BC}(X) \;:=\; \frac{\ker\del\cap\ker\delbar}{\imm\del\delbar}
\qquad \text{ and } \qquad
H^{\bullet,\bullet}_{A}(X) \;:=\; \frac{\ker\del\delbar}{\imm\del+\imm\delbar}
$$
have raised much interest in several areas of Mathematics and Physics: see, e.g., B. Bigolin \cite{bigolin, bigolin-2}, J. Varouchas \cite{varouchas}, L. Alessandrini and G. Bassanelli \cite{alessandrini-bassanelli}, and, more recently, M. Schweitzer \cite{schweitzer} and R. Kooistra \cite{kooistra} (in the context of cohomology theories), J.-M. Bismut \cite{bismut-cras11, bismut} (in the context of Chern characters), and L.-S. Tseng and S.-T. Yau \cite{tseng-yau-3} (in the framework of generalized geometry and type II string theory).

The Bott-Chern cohomology turns out to be particularly interesting in the non-K\"ahler case. In fact, for compact K\"ahler manifolds, or, more in general, for compact complex manifolds satisfying the \emph{$\del\delbar$-Lemma} (that is, the very special cohomological property that every $\del$-closed $\delbar$-closed $\de$-exact form is $\del\delbar$-exact), the Bott-Chern cohomology coincides with the Dolbeault cohomology.

In \cite{angella-tomassini-3}, the first author and A. Tomassini proved that, on a compact complex manifold $X$, a Fr\"olicher-type inequality relates the dimension of $H^{\bullet,\bullet}_{BC}(X)$ and the dimension of $H^{\bullet}_{dR}(X;\C)$, namely, \cite[Theorem A]{angella-tomassini-3},
\begin{equation}\label{eq:frolicher-bc}
 \text{ for every }k\in\N \;, \qquad \sum_{p+q=k}\left(\dim_\C H^{p,q}_{BC}(X) + \dim_\C H^{p,q}_{A}(X)\right) \;\geq\; 2\,\dim_\C H^{k}_{dR}(X;\C) \;,
\end{equation}
and that the equality in \eqref{eq:frolicher-bc} holds if and only if $X$ satisfies the $\del\delbar$-Lemma, \cite[Theorem B]{angella-tomassini-3}.

\medskip

A special class of manifolds, which turn out to provide useful examples in studying geometric and cohomological properties of (almost-)complex manifolds, is given by the class of \emph{nilmanifolds}, namely, compact quotients $X=\left.\Gamma\right\backslash G$ of a connected simply-connected nilpotent Lie group $G$ by a discrete co-compact subgroup $\Gamma$.

By K. Nomizu's theorem \cite[Theorem 1]{nomizu}, the finite-dimensional vector space $\wedge^\bullet\duale{\mathfrak{g}}$ of linear forms on the dual of the Lie algebra $\mathfrak{g}$ naturally associated to $G$ yields a minimal model for the de Rham complex $\left(\wedge^\bullet X,\, \de\right)$ of $X$; this means, in particular, that the de Rham cohomology of $X$ can be computed as the cohomology of a finite-dimensional subcomplex of $\left(\wedge^\bullet X,\, \de\right)$.

On the other hand, nilmanifolds cannot be formal, \cite[Theorem 1]{hasegawa}, and hence, in particular, neither they satisfy the $\del\delbar$-Lemma nor they admit K\"ahler metrics.

\medskip

The $6$-dimensional nilmanifolds can be classified in terms of their Lie algebra, up to isomorphisms, in $34$ classes, according to V.~V. Morozov's classification, \cite{morozov}, see also \cite{magnin}. As regards their complex geometry, the works by S.~M. Salamon \cite{salamon}, L. Ugarte and R. Villacampa \cite{ugarte-villacampa}, A. Andrada, M.~L. Barberis, and I. Dotti \cite{andrada-barberis-dotti}, and M. Ceballos, A. Otal, L. Ugarte, and R. Villacampa \cite{ceballos-otal-ugarte-villacampa} give a complete classification, up to equivalence, of the invariant complex structures on $6$-dimensional nilmanifolds.

\medskip

Even if they admit no K\"ahler metric, nilmanifolds can be endowed with other special Hermitian metrics, whose associated $(1,1)$-form satisfies conditions weaker than the K\"ahler condition, see, e.g., \cite{michelsohn, bismut--math-ann-1989, ugarte, fino-parton-salamon, adela-luis-raquel}. A special class of Hermitian metrics that recently have arisen interest in several areas of Mathematics and Physics is the class of \emph{pluriclosed} (also called \emph{strong-K\"ahler with torsion}, shortly, \emph{\textsc{skt}}) metrics, namely, the Hermitian metrics whose associated $(1,1)$-form is $\del\delbar$-closed, see, e.g., \cite{bismut--math-ann-1989, fino-parton-salamon, rossi-tomassini, enrietti-fino-vezzoni, cavalcanti-skt}.

\medskip

In this note, we are concerned especially in studying cohomological properties of the $6$-dimensional nilmanifolds endowed with invariant complex structures; furthermore, we investigate cohomological properties of special classes of nilmanifolds admitting pluriclosed metrics. More precisely, we compute explicitly the dimensions of the Bott-Chern cohomology groups for each of the complex structures in M. Ceballos, A. Otal, L. Ugarte, and R. Villacampa's classification, \cite{ceballos-otal-ugarte-villacampa}: in view of the Fr\"olicher-type inequality \eqref{eq:frolicher-bc} and of K. Hasegawa's theorem \cite[Theorem 1]{hasegawa}, such dimensions, together with the Betti numbers, measure, in some sense, the non-K\"ahlerianity of nilmanifolds. We provide some sufficient conditions assuring the validity of the $\del\delbar$-Lemma on a compact complex manifold. Then, we study invariant pluriclosed structures on $6$-dimensional nilmanifolds, proving that only seven of the classes in the cohomological classification by means of the Bott-Chern cohomology contain invariant complex structures admitting invariant pluriclosed metrics, see Theorem \ref{thm:existence-skt}. On a $6$-dimensional nilmanifold endowed with an invariant complex structure, the existence of pluriclosed metrics and the existence of balanced metrics are complementary properties: indeed, by \cite[Proposition 1.4]{fino-parton-salamon}, or also \cite[Remark 1]{alexandrov-ivanov}, any Hermitian metric being both pluriclosed and balanced is in fact K\"ahler, and by \cite[Theorem 1.2]{fino-parton-salamon} the pluriclosed property is satisfied by either all invariant Hermitian metrics or by none; we refer to \cite{adela-luis-raquel} for the complementary study of balanced structures on $6$-dimensional nilmanifolds: there, A. Latorre, L. Ugarte, and R. Villacampa study the behaviour of the cohomology in relation to the existence of balanced Hermitian metrics on $6$-dimensional nilmanifolds endowed with invariant complex structures. In \cite[Theorem 14]{rossi-tomassini}, the third author and A. Tomassini provided a classification of $8$-dimensional nilmanifolds endowed with a 
invariant complex structure such that every invariant Hermitian metric is pluriclosed; such classification consists of two classes, the first of which contains manifolds of the type $M^6\times \T^2$, where $M^6$ is a $6$-dimensional nilmanifold admitting pluriclosed structures and $\T^2$ is the standard $2$-dimensional torus endowed with the standard complex structure. Hence, we investigate cohomological properties of manifolds of such type $M^6\times \T^2$.

\medskip

During the preparation of the final draft, we have been informed by L. Ugarte that A. Latorre, L. Ugarte, and R. Villacampa performed similar computations in \cite{adela-luis-raquel}, with the aim to further investigate the behaviour of the Bott-Chern cohomology under deformations of the complex structure, and the possible relation between cohomological properties and the existence of balanced or strongly-Gauduchon metrics; we refer to \cite{adela-luis-raquel} for further details.

\bigskip

\noindent{\sl Acknowledgments.} The authors would like to warmly thank Adriano Tomassini for his constant support and encouragement and for many interesting conversations and useful suggestions. We wish to warmly thank Luis Ugarte, Adela Latorre and Raquel Villacampa for pointing out the preprint \cite{adela-luis-raquel} and for their several suggestions and remarks, which highly improved this paper. Thanks are also due to Nicola Enrietti for helpful suggestions concerning the use of Maple for Complex Geometry. Many thanks to the anonymous Referee, whose suggestions improved the presentation of the paper.

\section{Preliminaries and notation}

\subsection{Bott-Chern cohomology of non-K\"ahler manifolds}

Let $X$ be a compact complex manifold. Other than Dolbeault cohomology, another important tool to study the complex geometry of $X$ is provided by the {\em Bott-Chern} and {\em Aeppli cohomologies}:
$$ H^{\bullet,\bullet}_{BC}(X) \;:=\; \frac{\ker\del\cap\ker\delbar}{\imm\del\delbar}\;,\qquad H^{\bullet,\bullet}_{A}(X) \;:=\; \frac{\ker\del\delbar}{\imm\del+\imm\delbar} \;.$$
Note that, while the Dolbeault cohomology groups are not symmetric, one has that, for every $p,\,q\in\N$, the complex conjugation induces the isomorphisms $H^{p,q}_{BC}(X)\simeq H^{q,p}_{BC}(X)$ and $H^{p,q}_{A}(X)\simeq H^{q,p}_{A}(X)$.

\medskip

One has the natural maps of (bi-)graded $\C$-vector spaces
$$
\xymatrix{
 & H^{\bullet,\bullet}_{BC}(X) \ar[dl]\ar[dr]\ar[d] & \\
H^{\bullet,\bullet}_{\delbar}(X) \ar[dr] & H^\bullet_{dR}(X;\C) \ar[d] & H^{\bullet,\bullet}_{\del}(X) \ar[dl] \\
 & H^{\bullet,\bullet}_{A}(X) & \\
}
$$
which are, in general, neither injective nor surjective. A compact complex manifold is said to satisfy the \emph{$\del\delbar$-Lemma} if every $\del$-closed $\delbar$-closed $\de$-exact form is $\del\delbar$-exact, namely, if the map $H^{\bullet,\bullet}_{BC}(X)\to H^{\bullet}_{dR}(X;\C)$ is injective: this turns out to be equivalent to all the above maps being isomorphisms, \cite[Lemma 5.15, Remark 5.16, 5.21]{deligne-griffiths-morgan-sullivan}. The compact complex manifolds admitting a K\"ahler metric, or, more in general, belonging to \emph{class $\mathcal{C}$ of Fujiki}, \cite{fujiki}, satisfy the $\del\delbar$-Lemma, \cite[Lemma 5.11, Corollary 5.23]{deligne-griffiths-morgan-sullivan}.

\medskip

Fixed a $J$-Hermitian metric $g$ on $X$, consider the \kth{4} order self-adjoint elliptic differential operators
$$ \tilde\Delta_{BC} \;:=\;
\left(\del\delbar\right)\left(\del\delbar\right)^*+\left(\del\delbar\right)^*\left(\del\delbar\right)+\left(\delbar^*\del\right)\left(\delbar^*\del\right)^*+\left(\delbar^*\del\right)^*\left(\delbar^*\del\right)+\delbar^*\delbar+\del^*\del $$
and
$$ \tilde\Delta_{A} \;:=\; \del\del^*+\delbar\delbar^*+\left(\del\delbar\right)^*\left(\del\delbar\right)+\left(\del\delbar\right)\left(\del\delbar\right)^*+\left(\delbar\del^*\right)^*\left(\delbar\del^*\right)+\left(\delbar\del^*\right)\left(\delbar\del^*\right)^* \;, $$
see \cite[Proposition 5]{kodaira-spencer-3}, see also \cite[\S2.b, \S2.c]{schweitzer}, \cite[\S5.1]{bigolin}; by means of them, one gets a Hodge theory for the Bott-Chern, respectively the Aeppli, cohomology: indeed, one has the following isomorphisms:
$$ H^{\bullet,\bullet}_{BC}(X) \;\simeq\; \ker\tilde\Delta_{BC} \;=\; \ker\del\cap\ker\delbar\cap\ker\left(\del\delbar\right)^* \;, $$
\cite[Théorème 2.2]{schweitzer}, and
$$ H^{\bullet,\bullet}_{A}(X) \;\simeq\; \ker\tilde\Delta_{A} \;=\; \ker\left(\del\delbar\right)\cap\ker\del^*\cap\ker\delbar^* \;,$$
\cite[\S2.c]{schweitzer}. In particular, one gets, \cite[Corollaire 2.3, \S2.c]{schweitzer}, that
$$ \dim_\C H^{\bullet,\bullet}_{BC}(X) \;<\; +\infty \qquad \text{ and } \qquad \dim_\C H^{\bullet,\bullet}_{A}(X) \;<\; +\infty \;, $$
and that, for every $p,q\in\N$,
$$ *\colon H^{p,q}_{BC}(X) \stackrel{\simeq}{\longrightarrow} H^{\dim_\C X-q,\dim_\C X-p}_{A}(X) \;.$$

\medskip

For any $k\in\N$, set
$$ \Delta^k(X) \;:=\; \sum_{p+q=k} \left(\dim_\C H^{p,q}_{BC}(X)+\dim_\C H^{p,q}_{A}(X)\right) - 2\,b_k \;,$$
where $b_k:=\dim_\R H_{dR}^k(X;\R)$ is the \kth{k} Betti number of $X$. Note that, if $X$ satisfies the $\del\delbar$-Lemma, then $\Delta^k=0$ for every $k\in\N$.\\
The first author and A. Tomassini proved that, \cite[Theorem A]{angella-tomassini-3},
$$ \text{for every }k\in\N\;, \qquad \Delta^k(X)\geq 0 \;,$$
and that, \cite[Theorem B]{angella-tomassini-3}, if $\Delta^k(X)=0$ for every $k\in\N$, then $X$ satisfies the $\del\delbar$-Lemma.

\medskip

Note that $\Delta^k(X)=\Delta^{\dim_\C X-k}(X)$ for every $k\in\N$; hence, on a $6$-dimensional manifold endowed with a complex structure, just $\Delta^1(X)$, $\Delta^2(X)$, and $\Delta^3(X)$ have to be computed.

\subsection{Invariant complex structures on $6$-dimensional nilmanifolds}\label{subsec:inv-cplx-str}

Let $\X$ be a nilmanifold, that is, a compact quotient of a connected simply-connected nilpotent Lie group $G$ by a discrete co-compact subgroup $\Gamma$ of $G$; denote the Lie algebra associated to $G$ by $\g$, and let $\g_\C:=\g\otimes_\R\C$.

Dealing with \emph{($G$-left-)invariant} objects on $X$, we mean objects on $X$ induced by objects on $G$ which are invariant under the action of $G$ on itself given by left-translations; equivalently, through left-translations, any invariant object on $X$ is uniquely determined by an object on spaces constructed starting with the Lie algebra $\g$.

\medskip

V.~V. Morozov classified in \cite{morozov} the $6$-dimensional nilpotent Lie algebras, up to isomorphism, in $34$ different classes, see also \cite{magnin}. In \cite{salamon}, S.~M. Salamon identified the $18$ classes of $6$-dimensional nilpotent Lie algebra admitting a linear integrable complex structure: up to equivalence\footnote{We recall that two linear integrable complex structures $J$ and $J'$ on a Lie algebra $\mathfrak{g}$ are said to be {\em equivalent} if there exists an automorphism $F\colon \mathfrak{g}\to\mathfrak{g}$ of the Lie algebra such that $J=F^{-1}\circ J'\circ F$.}, the linear integrable non-nilpotent complex structures\footnote{We recall that a linear integrable complex structure $J$ on a $2n$-dimensional Lie algebra $\mathfrak{g}$ is called \emph{nilpotent} if there is a basis $\left\{\omega^1,\ldots,\omega^n\right\}$ for $\left(\mathfrak{g}^{1,0}\right)^*$ with respect to which the structure equations are of the form $\de\omega^j = \sum_{h<k<j}A_{hk}^j\,\omega^h\wedge\omega^k+\sum_{h,k<j}B_{h k}^j\,\omega^h\wedge\bar\omega^k$ with $A_{hk}^j,\,B_{h k}^j\in\C$.} have been classified by L. Ugarte and R. Villacampa in \cite{ugarte-villacampa}, the linear integrable Abelian complex structures\footnote{We recall that a linear integrable complex structure $J$ on a Lie algebra $\mathfrak{g}$ is called \emph{Abelian} if $\left[Jx,\,Jy\right]=\left[x,\,y\right]$ for any $x,y\in\mathfrak{g}$.} have been classified by A. Andrada, M.~L. Barberis, and I. Dotti in \cite{andrada-barberis-dotti}, and lastly the linear integrable nilpotent non-Abelian complex structures have been classified by M. Ceballos, A. Otal, L. Ugarte, and R. Villacampa in \cite{ceballos-otal-ugarte-villacampa}.

We recall in Table \ref{table:classification} the classification in \cite{ceballos-otal-ugarte-villacampa}.
As a matter of notation, we identify a Lie algebra $\mathfrak{g}$ (or the associated connected simply-connected Lie group $G$, or the associated nilmanifold $X=\left.\Gamma\right\backslash G$, where $\Gamma$ is a discrete co-compact subgroup of $G$) by its structure equations, namely, writing
$$ \mathfrak{h}_5 \;:=\; \left(0^4,\, 13+42,\, 14+23\right) \;:=:\; \left(0,\, 0,\, 0,\, 0,\, 13+42,\, 14+23\right) \;, $$
we mean that there exists a basis $\left\{e_j\right\}_{j\in\{1,\ldots,6\}}$ of $\mathfrak{g}$ such that, with respect to the dual basis $\left\{e^j\right\}_{j\in\{1,\ldots,6\}}$ of $\duale{\g}$, the structure equations are
$$
\left\{
\begin{array}{l}
 \de e^1 \;=\; \de e^2 \;=\; \de e^3 \;=\; \de e^4 \;=\; 0 \\[5pt]
 \de e^5 \;=\; e^1\wedge e^3+e^4\wedge e^2 \\[5pt]
 \de e^6 \;=\; e^1\wedge e^4+e^2\wedge e^3
\end{array}
\right. \;,
$$
where usually we also shorten, e.g., $e^{AB}:=e^A\wedge e^B$. Similarly, we identify a complex structure on a nilmanifold by its structure equations in terms of a coframe of the holomorphic cotangent bundle, namely, writing
$$ J_2 \;:=\; \left(0,\, 0,\, \omega^{12}\right) \qquad \text{ on } \mathfrak{h}_5 \;, $$
we mean that there exists an invariant coframe $\left\{\omega^1,\, \omega^2,\, \omega^3\right\}$ of the $\mathcal{C}^\infty(X;\C)$-module $T^{1,0}X$, where $X = \left. \Gamma \right\backslash G$ is a nilmanifold with Lie algebra $\mathfrak{h}_5$, such that the structure equations read
$$
\left\{
\begin{array}{l}
 \de \omega^1 \;=\; \de \omega^2 \;=\; 0 \\[5pt]
 \de \omega^3 \;=\; \omega^1\wedge\omega^2
\end{array}
\right. \;,
$$
where usually we shorten, e.g., $\omega^{A\bar B}:=\omega^A\wedge\bar\omega^B$.

Other than the conditions listed in Table \ref{table:classification}, as in \cite{ceballos-otal-ugarte-villacampa} we assume that the parameters satisfy $\lambda\geq 0$, $c\geq 0$, $B\in\C$, and $D\in\C$.

\subsection{Cohomologies of nilmanifolds}

The space $\wedge^\bullet\duale{\g}$, equivalently, the space of invariant differential forms on $X$, yields a sub-complex $\left(\wedge^\bullet\duale{\g},\,\de\right)$ of the de Rham complex, $\left(\wedge^\bullet X,\,\de\right)$, where $\de\colon\wedge^\bullet\duale{\g}\to\wedge^{\bullet+1}\duale{\g}$ is induced by $\de\lfloor_{\wedge^1\duale{\g}}\colon \wedge^{1}\duale{\g}\ni\alpha\mapsto\de\alpha :=-\alpha\left(\left[\sspace,\,\ssspace\right]\right) \in\wedge^{2}\duale{\g}$. By K. Nomizu's theorem \cite[Theorem 1]{nomizu}, the map $\left(\wedge^\bullet\duale{\g},\,\de\right)\to\left(\wedge^\bullet X,\,\de\right)$ is a quasi-isomorphism, that is, the de Rham cohomology of $X$ can be computed using just invariant forms. (The same result holds true, more in general, for completely-solvable solvmanifolds, as proved by A. Hattori, \cite[Corollary 4.2]{hattori}; a counter-example for general solvmanifolds has been provided in \cite[Corollary 4.2, Remark 4.3]{debartolomeis-tomassini} by P. de 
Bartolomeis and A. Tomassini.)

\medskip

Analogously, an invariant complex structure $J$ on $X$ induces a bi-graduation on the space of invariant forms, equivalently $\wedge^{\bullet,\bullet}\duale{\g}_\C$, and a sub-double-complex $\left(\wedge^{\bullet,\bullet}\duale{\g}_\C,\,\del,\,\delbar\right)\hookrightarrow\left(\wedge^{\bullet,\bullet} X,\,\del,\,\delbar\right)$; this inclusion induces an injective morphism in cohomology, \cite[Lemma 9]{console-fino}, namely,
\begin{equation}\label{eq:inclusione-dolbeault}
\frac{\ker\left(\delbar\colon \wedge^{\bullet,\bullet}\duale{\g}_\C\to\wedge^{\bullet,\bullet+1}\duale{\g}_\C\right)}{\imm\left(\delbar\colon\wedge^{\bullet,\bullet-1}\duale{\g}_\C\to\wedge^{\bullet,\bullet}\duale{\g}_\C\right)} \;\hookrightarrow \; 
H^{\bullet,\bullet}_{\delbar}(X)
\;.
\end{equation}
Similarly, one has an injective morphism into Bott-Chern cohomology, \cite[Lemma 3.6]{angella},
\begin{equation}\label{eq:inclusione-bott-chern}
\frac{\ker\left(\de\colon \wedge^{\bullet,\bullet}\duale{\g}_\C\to\wedge^{\bullet+1,\bullet}\duale{\g}_\C\oplus\wedge^{\bullet,\bullet+1}\duale{\g}_\C\right)}{\imm\left(\del\delbar\colon\wedge^{\bullet-1,\bullet-1}\duale{\g}_\C\to\wedge^{\bullet,\bullet}\duale{\g}_\C\right)} \;\hookrightarrow \;
H^{\bullet,\bullet}_{BC}(X)
\;.
\end{equation}
By saying that the Dolbeault, respectively the Bott-Chern, cohomology of $X$ is \emph{($G$-left-)invariant}, we mean that the inclusion \eqref{eq:inclusione-dolbeault}, respectively \eqref{eq:inclusione-bott-chern}, is actually an isomorphism.
The invariance of the Bott-Chern cohomology can be seen as a consequence of the invariance of the Dolbeault cohomology, as proved by the first author, \cite[Theorem 3.7]{angella}.\\
It is not known whether, for any invariant complex structure on a nilmanifold, the Dolbeault cohomology is invariant, \cite[Conjecture 1]{rollenske-survey}, see also \cite[page 5406]{cordero-fernandez-gray-ugarte}, \cite[page 112]{console-fino}. However, this turns out to be true for a large class of invariant complex structures: more precisely, the following result holds.

\begin{thm}[{\cite[Theorem 1]{sakane}, \cite[Main Theorem]{cordero-fernandez-gray-ugarte}, \cite[Theorem 2, Remark 4]{console-fino}, \cite[Theorem 1.10]{rollenske}, \cite[Theorem 3.8]{angella}}]
 Let $\X$ be a nilmanifold endowed with an invariant complex structure $J$.
 The Dolbeault cohomology, and the Bott-Chern cohomology of $X$ are invariant, provided one of the following conditions holds:
\begin{itemize}
 \item $X$ is holomorphically parallelizable, \cite[Theorem 1]{sakane};
 \item $J$ is an \emph{Abelian} complex structure (i.e., $\left[Jx,\,Jy\right]=\left[x,\,y\right]$ for any $x,y\in\mathfrak{g}$), \cite[Remark 4]{console-fino};
 \item $J$ is a \emph{nilpotent} complex structure (i.e., there is an invariant coframe $\left\{\omega^1,\ldots,\omega^n\right\}$ for $\left(T^{1,0}X\right)^*$ with respect to which the structure equations of $X$ are of the form
$$ \de\omega^j\;=\; \sum_{h<k<j}A_{hk}^j\,\omega^h\wedge\omega^k+\sum_{h,k<j}B_{h k}^j\,\omega^h\wedge\bar\omega^k $$
with $\left\{A_{hk}^j,\,B_{h k}^j\right\}_{j,h,k}\subset\C$), \cite[Main Theorem]{cordero-fernandez-gray-ugarte};
 \item $J$ is a \emph{rational} complex structure (i.e., $J\left(\mathfrak{g}_\Q\right)\subseteq \mathfrak{g}_\Q$ where $\mathfrak{g}_\Q$ is the rational structure for $\mathfrak{g}$ induced by $\Gamma$ ---where a rational structure for $\mathfrak{g}$ is a $\Q$-vector space such that $\mathfrak{g}=\mathfrak{g}_\Q\otimes_\Q\R$), \cite[Theorem 2]{console-fino};
 \item $\mathfrak{g}$ admits a torus-bundle series compatible with $J$ and with the rational structure induced by $\Gamma$, \cite[Theorem 1.10]{rollenske}.
\end{itemize}
\end{thm}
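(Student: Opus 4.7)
The plan is to separate the theorem into a reduction and a five-way case analysis. By \cite[Theorem 3.7]{angella}, which is the result quoted immediately above the statement, the left-invariance of the Bott-Chern cohomology is a formal consequence of the left-invariance of the Dolbeault cohomology. Thus it suffices to prove that the inclusion \eqref{eq:inclusione-dolbeault} is an isomorphism under each of the five hypotheses, and the Bott-Chern analogue \eqref{eq:inclusione-bott-chern} then follows automatically.

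\emph{Case (i)}, the holomorphically parallelizable case, is handled in \cite[Theorem 1]{sakane} by trivializing the holomorphic cotangent bundle of $X$ via $G$-invariant holomorphic $1$-forms and computing the Dolbeault cohomology explicitly, the $\delbar$-direction being treated through a Nomizu-type argument. \emph{Case (ii)}, the Abelian case, is addressed in \cite[Remark 4]{console-fino}: the condition $\left[Jx,Jy\right]=\left[x,y\right]$ forces the structure equations to satisfy $\de\omega^j\in\wedge^{1,1}\duale{\g}_\C$, and a symmetrization argument in the Dolbeault complex then goes through cleanly. \emph{Case (iii)}, the nilpotent case of \cite[Main Theorem]{cordero-fernandez-gray-ugarte}, uses the staircase filtration encoded in the structure equations to exhibit $X$ as an iterated principal holomorphic torus bundle, and then runs a Borel-type spectral sequence at each step. \emph{Case (iv)}, the rational case \cite[Theorem 2]{console-fino}, exploits the compatibility of $J$ with the rational structure induced by $\Gamma$ in order to average over $\Gamma$ and extract an invariant representative in each Dolbeault class. \emph{Case (v)}, the torus-bundle-series case \cite[Theorem 1.10]{rollenske}, iterates the Leray spectral sequence along the associated sequence of principal holomorphic torus fibrations, applying \cite[Theorem 1]{nomizu} at each level.

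The only non-formal obstacle lies in the reduction step \cite[Theorem 3.7]{angella}, whose proof requires a careful compatibility check between the averaging projection induced by the Haar measure on $\left.\Gamma\right\backslash G$, the differentials $\del$ and $\delbar$, and the natural maps from Bott-Chern to Dolbeault and de Rham cohomology displayed in the previous subsection; the content is that a $\del\delbar$-exact invariant form is automatically $\del\delbar$-exact within $\wedge^{\bullet,\bullet}\duale{\g}_\C$ as soon as the Dolbeault inclusion \eqref{eq:inclusione-dolbeault} is an isomorphism. Given this reduction, the proof of the theorem reduces to invoking each of the cited results in turn, and no routine calculation remains to be performed.
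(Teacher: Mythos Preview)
The paper does not actually prove this theorem: it is stated as a survey result, with each bullet carrying its own citation and no proof environment following. Your proposal is therefore aligned with the paper's treatment---reduce Bott-Chern to Dolbeault via \cite[Theorem~3.7]{angella} and then invoke the five cited results---and nothing more is required.

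One small caveat on your commentary: your sketch of case~(iv) (``average over $\Gamma$ and extract an invariant representative'') does not match what \cite[Theorem~2]{console-fino} actually does. Console and Fino argue by showing that the set of left-invariant complex structures for which \eqref{eq:inclusione-dolbeault} is an isomorphism is Zariski-open in the space of all such structures, and that rational complex structures lie in this set because rationality allows one to realise $X$ as an iterated holomorphic torus fibration compatible with the lattice; there is no direct averaging over $\Gamma$. This does not affect the logical structure of your proof, since you only need to \emph{cite} the result, but the parenthetical description is misleading.
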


\medskip

We recall that, since every $6$-dimensional nilpotent Lie algebra, except $\h_7$, admits a stable torus-bundle series compatible with any linear complex structure and any rational structure, \cite[Theorem B]{rollenske}, one gets the following result.

\begin{cor}[{\cite[Corollary 3.10]{rollenske-survey}}]
 For any $6$-dimensional nilmanifold endowed with an invariant complex structure and with Lie algebra non-isomorphic to $\mathfrak{h}_7=\left(0,\, 0,\, 0,\, 12,\, 13,\, 23\right)$, both the Dolbeault cohomology and the Bott-Chern cohomology are invariant.
\end{cor}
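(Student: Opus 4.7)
The plan is to derive the corollary as an almost direct consequence of the theorem just stated, using the last bullet (torus-bundle series criterion) together with S. Rollenske's structural result on $6$-dimensional nilpotent Lie algebras.

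First, I would set up the notation: let $X=\Gamma\backslash G$ be a $6$-dimensional nilmanifold with associated Lie algebra $\mathfrak{g}\not\simeq\mathfrak{h}_7$, and let $J$ be a $G$-left-invariant complex structure on $X$. The discrete co-compact subgroup $\Gamma$ determines, in a canonical way, a rational structure $\mathfrak{g}_\Q$ on $\mathfrak{g}$ (via the $\Z$-span of a basis of $\log\Gamma$ after Mal'cev), so one can meaningfully speak of torus-bundle series compatible with both $J$ and $\mathfrak{g}_\Q$.

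Next, I would invoke \cite[Theorem B]{rollenske}: every $6$-dimensional nilpotent Lie algebra other than $\mathfrak{h}_7$ admits a \emph{stable} torus-bundle series which is compatible simultaneously with any linear complex structure $J$ and with any rational structure. Applied to our $\mathfrak{g}$, $J$, and $\mathfrak{g}_\Q$, this yields the hypothesis of the fifth bullet in the theorem recalled above; that theorem then gives that the Dolbeault cohomology of $X$ is $G$-left-invariant, i.e.\ the inclusion \eqref{eq:inclusione-dolbeault} is an isomorphism.

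Finally, to pass from Dolbeault to Bott-Chern $G$-left-invariance, I would cite \cite[Theorem 3.7]{angella}, which asserts precisely that, for a nilmanifold endowed with a left-invariant complex structure, $G$-left-invariance of Dolbeault cohomology implies $G$-left-invariance of Bott-Chern cohomology (through an argument based on the Hodge theory for $\tilde\Delta_{BC}$ and a suitable symmetrization by the Haar measure of $G/\Gamma$). This gives the conclusion for $\mathfrak{g}\not\simeq\mathfrak{h}_7$. There is essentially no obstacle: the corollary is a bookkeeping assembly of \cite[Theorem B]{rollenske}, the torus-bundle bullet of the preceding theorem, and \cite[Theorem 3.7]{angella}; the only subtle point to check is that the rational structure supplied by $\Gamma$ is the one for which Rollenske's stability statement applies, which is guaranteed by the ``any rational structure'' clause in \cite[Theorem B]{rollenske}.
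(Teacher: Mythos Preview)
Your proposal is correct and follows essentially the same route as the paper: the paper derives the corollary in one sentence from \cite[Theorem B]{rollenske} (stable torus-bundle series for every $6$-dimensional nilpotent Lie algebra except $\mathfrak{h}_7$, compatible with any linear complex structure and any rational structure) combined with the last bullet of the preceding theorem, which already packages the Dolbeault and Bott-Chern conclusions together via \cite[Theorem 3.7, Theorem 3.8]{angella}. Your version simply unpacks the Bott-Chern step explicitly rather than absorbing it into the theorem's statement.
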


\medskip

Lastly, we recall that, if $X$ satisfies the $\del\delbar$-Lemma, then the differential graded algebra $\left(\wedge^{\bullet}X,\,\de\right)$ is formal, \cite[Main Theorem]{deligne-griffiths-morgan-sullivan}. By K. Hasegawa's theorem \cite[Theorem 1]{hasegawa}, no nilmanifold is formal except for the torus. In particular, the $\del\delbar$-Lemma does not hold for any complex structure on a non-torus nilmanifold.

\section{Bott-Chern cohomology of $6$-dimensional nilmanifolds}\label{sec:bc-coh}

In this section, we provide the results of the computation of the Bott-Chern cohomology for each of the complex structure in M. Ceballos, A. Otal, L. Ugarte, and R. Villacampa's classification, \cite{ceballos-otal-ugarte-villacampa}. In view of \cite[Corollary 3.10]{rollenske-survey}, and noting that the unique (up to equivalence) complex structure on the nilmanifold corresponding to $\mathfrak{h}_7$ is a rational complex structure, one is reduced to study invariant $\tilde\Delta_{BC}$-harmonic forms. Note that the Dolbeault, respectively Bott-Chern, cohomology being invariant is not an up-to-equivalence property: that is, the Bott-Chern numbers $h^{p,q}_{BC}:=\dim_\C H^{p,q}_{BC}(X)$ in Table \ref{table:bott-chern-6} provide a complete picture \emph{except for} nilmanifolds with Lie algebra isomorphic to $\mathfrak{h}_7$.
Such computations have been performed with the aid of a symbolic calculations software. In Figure \ref{fig:3d}, we provide a graphical visualization of the non-K\"ahlerianity of such $6$-dimensional nilmanifolds by means of the degrees $\Delta^1$, $\Delta^2$, and $\Delta^3$.

\begin{thm}
 Consider a $6$-dimensional nilpotent Lie algebra endowed with a linear integrable complex structure. Equivalently, consider a $6$-dimensional nilmanifold with Lie algebra non-isomorphic to $\mathfrak{h}_7=\left(0,\, 0,\, 0,\, 12,\, 13,\, 23\right)$ and endowed with any invariant complex structure, or a nilmanifold with Lie algebra isomorphic to $\mathfrak{h}_7$ and endowed with the invariant complex structure in M. Ceballos, A. Otal, L. Ugarte, and R. Villacampa's classification, \cite{ceballos-otal-ugarte-villacampa}, see Table \ref{table:classification}. Then the dimensions of the Bott-Chern cohomology groups are provided in Table \ref{table:bott-chern-6}.
\end{thm}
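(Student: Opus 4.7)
The plan is to reduce the computation from the full de Rham complex of $X$ to the finite-dimensional Lie-algebra sub-complex $\left(\wedge^{\bullet,\bullet}\duale{\g}_\C,\,\del,\,\delbar\right)$, and then to carry out the remaining computation, case by case, as a problem in linear algebra. The reduction is immediate from the results recalled in the preliminaries: by \cite[Corollary 3.10]{rollenske-survey} the Bott-Chern cohomology is $G$-left-invariant for every left-invariant complex structure on every $6$-dimensional nilmanifold whose Lie algebra is not isomorphic to $\h_7$; for the remaining algebra $\h_7$, the unique complex structure appearing in Table~\ref{table:classification} is rational, so \cite[Theorem 2]{console-fino} applies and again the inclusion \eqref{eq:inclusione-bott-chern} is an isomorphism. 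Hence in every case listed the Bott-Chern numbers of $X$ coincide with the dimensions of
$$
\frac{\ker\left(\de\colon \wedge^{p,q}\duale{\g}_\C \to \wedge^{p+1,q}\duale{\g}_\C\oplus\wedge^{p,q+1}\duale{\g}_\C\right)}{\imm\left(\del\delbar\colon \wedge^{p-1,q-1}\duale{\g}_\C\to\wedge^{p,q}\duale{\g}_\C\right)}\;.
$$

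The second step is to set up these finite-dimensional complexes explicitly. Fix a class in Table~\ref{table:classification}; the complex structure is given by a coframe $\left\{\omega^1,\omega^2,\omega^3\right\}$ of $\duale{\g}_\C^{1,0}$ whose structure equations are listed there. Taking wedge products of $\left\{\omega^j\right\}$ and $\left\{\bar\omega^j\right\}$ yields an explicit basis of each $\wedge^{p,q}\duale{\g}_\C$, of dimension $\binom{3}{p}\binom{3}{q}$; the operators $\del$ and $\delbar$ on these spaces are induced by the Leibniz rule from the bidegree decomposition of $\de\omega^j$ and $\de\bar\omega^j$, and their matrices can be written down directly. Since only $\Delta^1$, $\Delta^2$, $\Delta^3$ are needed (by the symmetry $\Delta^k=\Delta^{\dim_\C X-k}$ and the duality $H^{p,q}_{BC}\simeq H^{3-q,3-p}_{A}$), it suffices to compute $\dim H^{p,q}_{BC}$ for the finitely many bidegrees $(p,q)$ with $p+q\in\{1,2,3\}$ and then recover the remaining ones by conjugation symmetry and Hodge-star duality, which, via the Aeppli counterpart, also gives the $\dim H^{p,q}_{A}$ entries entering $\Delta^k$.

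The third step is the linear algebra itself: for each $(p,q)$ one has $\dim H^{p,q}_{BC} = \dim\left(\ker\del\cap\ker\delbar\right) - \dim\,\imm\left(\del\delbar\right)$ at bidegree $(p,q)$, both of which reduce to kernel and rank computations for explicit matrices with entries in $\Q\left[\lambda,B,\bar B,c,D,\bar D\right]$ (with the constraints listed in Table~\ref{table:classification}). Implementing this in a symbolic algebra system (Maple) produces the values of Table~\ref{table:bott-chern-6} for each fixed class and, when a parameter is present, for each stratum of the parameter space on which the ranks are constant.

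The main obstacle is not conceptual but organizational: there are $26$ numbered entries in Table~\ref{table:classification}, several of them depending on real or complex parameters (most notably class $09$ on $\h_5$, and the classes $17,18,21$ on $\h_{13},\h_{14},\h_{15}$, which are cut out by the sign of $\mathcal{S}(B,c)$). The delicate point is to verify that the ranks of the matrices representing $\del$, $\delbar$, and $\del\delbar$ remain constant on the strata singled out in the table, so that the entries of Table~\ref{table:bott-chern-6} really do describe the whole family and do not jump on a proper subvariety of the parameter space. Once this is checked, assembling the numbers and reading off $\Delta^1$, $\Delta^2$, $\Delta^3$ --- whose non-vanishing reconfirms, consistently with K. Hasegawa's theorem, the failure of the $\del\delbar$-Lemma on every non-toral class --- is immediate.
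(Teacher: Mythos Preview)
Your approach is essentially the paper's own: reduce to the left-invariant sub-complex via \cite[Corollary 3.10]{rollenske-survey} (and \cite[Theorem 2]{console-fino} for the rational structure on $\mathfrak{h}_7$), then feed the resulting finite-dimensional linear algebra into a symbolic computation system and tabulate the ranks over the parameter strata. The only slip is your claim that computing $h^{p,q}_{BC}$ for $p+q\le 3$ suffices to recover the rest ``by Hodge-star duality'': the Hodge star relates $H^{p,q}_{BC}$ to $H^{n-q,n-p}_{A}$, not to another Bott-Chern group, and there is no Serre-type duality $h^{p,q}_{BC}=h^{n-p,n-q}_{BC}$ in the absence of the $\del\delbar$-Lemma (indeed the entries in Table~\ref{table:bott-chern-6} visibly violate such a symmetry), so to fill the full table you must compute the Bott-Chern numbers in all bidegrees (or, equivalently, the Aeppli numbers for $p+q\le 3$ as well); this does not affect the method, only the bookkeeping.
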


\begin{rem}
 During the preparation of the final version of this note, L. Ugarte informed us on a similar paper in preparation by A. Latorre, L. Ugarte, and R. Villacampa, \cite{adela-luis-raquel}: they computed the dimensions of the Bott-Chern cohomology groups for any complex structure on a $6$-dimensional nilpotent Lie algebra using the classification of \cite{ceballos-otal-ugarte-villacampa}, with the aim to study variations of the cohomology under deformations of the complex structure, and the behaviour of the cohomology in relation to the existence of balanced Hermitian metrics or strongly-Gauduchon metrics. Since the main purpose (other than some results and, possibly, the techniques of computation) of our paper is partly different from \cite{adela-luis-raquel}, we believe that no crucial overlapping could distract the reader.
\end{rem}

\begin{rem}\label{rem:stab-bc}
 As a consequence of the computations, we notice that the degrees $\Delta^1\in\{0,\, 2\}$ and $\Delta^3\in\{0,\, 4,\, 8,\, 12\}$ do not depend on the chosen linear integrable complex structure on a fixed $6$-dimensional nilpotent Lie algebra, while $\Delta^2\in\{0,\, \ldots,\, 9\}$ does depend on it; more precisely, the numbers $h^{p,q}_{BC}$ depend just on the Lie algebra whenever $p+q$ odd (compare also \cite[Remark 5.2]{angella}). Furthermore, on a $6$-dimensional Lie algebra endowed with a linear integrable complex structure, one has that the Abelianity of the Lie algebra is equivalent to the condition $\Delta^3=0$.
\end{rem}

\section{Detecting the validity of the $\del\delbar$-Lemma}

In this section, we prove some straightforward formulas concerning
$$ \left\{ \Delta^k \;:=:\; \Delta^k(X) \;:=\; \sum_{p+q=k} \left(\dim_\C H^{p,q}_{BC}(X)+\dim_\C H^{p,q}_{A}(X)\right) - 2\,\dim_\C H^{k}_{dR}\left(X;\C\right) \right\}_{k\in\N} \;,$$
in order to detect the validity of the $\del\delbar$-Lemma on a compact complex manifold $X$ just in terms of few numbers.

\medskip

Let $X$ be a compact complex manifold of complex dimension $n$.
As a matter of notation, denote, for $k\in\N$,
$$ b_k \;:=\; \dim_\C H^k_{dR}(X;\C) $$
and, for $\sharp\in\left\{\del,\delbar,BC,A\right\}$ and $\left(p,q\right)\in\N^2$ and $k\in\N$,
$$ h^{p,q}_{\sharp} \;:=\; \dim_\C H^{p,q}_{\sharp}\left(X\right) \qquad \text{ and } \qquad h^k_{\sharp} \;:=\; \sum_{p+q=k} \dim_\C H^{p,q}_{\sharp}\left(X\right) \;;$$
we recall that the following equalities hold: for any $\left(p,q\right)\in\N^2$,
$$ h^{p,q}_{BC} \;=\; h^{q,p}_{BC} \;=\; h^{n-p,n-q}_{A} \;=\; h^{n-q,n-p}_{A} \qquad \text{ and } \qquad h^{p,q}_{\delbar} \;=\; h^{q,p}_{\del} \;=\; h^{n-p,n-q}_{\delbar} \;=\; h^{n-q,n-p}_{\del} $$
and, for any $k\in\N$,
$$ b_k \;=\; b_{2n-k} \;, \qquad h^k_{BC} \;=\; h^{2n-k}_{A} \;, \qquad \text{ and } \qquad h^k_{\delbar} \;=\; h^k_{\del} \;=\; h^{2n-k}_{\delbar} \;=\; h^{2n-k}_{\del} \;.$$

Denote the topological Euler-Poincaré characteristic of $X$ by
$$ \chi_{\text{top}} \;:=\; \sum_{k=0}^{2n} \left(-1\right)^k\, b_k \;=\; \sum_{k=0}^{2n}\sum_{p+q=k} \left(-1\right)^{k}\, h^{p,q}_{\delbar} \;,$$
see, e.g., \cite[Theorem VI.11.7]{demailly-agbook}.

One has the equalities
\begin{eqnarray}\label{eq:euler}
b_n \;=\; \left(-1\right)^n\, \chi_{\text{top}} + 2\, \sum_{k=0}^{n-1} \left(-1\right)^{n-k-1}\, b_k \qquad \text{ and } \qquad h^n_{\delbar} \;=\; \left(-1\right)^n\, \chi_{\text{top}} + 2\, \sum_{k=0}^{n-1} \left(-1\right)^{n-k-1}\, h^k_{\delbar} \;.
\end{eqnarray}

\begin{prop}
 Let $X$ be a compact complex manifold. For any $k\in\N$, it holds that $\Delta^{2k+1} \equiv 0\mod 2$.
\end{prop}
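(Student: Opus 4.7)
The plan is to exploit the symmetry $h^{p,q}_{BC}=h^{q,p}_{BC}$ and $h^{p,q}_{A}=h^{q,p}_{A}$ coming from complex conjugation (recalled in the paper right after the definition of the Bott--Chern and Aeppli cohomologies). For any odd integer $k=2\ell+1$, whenever $p+q=k$, the integers $p$ and $q$ have different parities, so in particular $p\ne q$: hence the indices $(p,q)$ running over $\{(p,q)\in\N^2\st p+q=k\}$ split into pairs $\{(p,q),(q,p)\}$ with $p\ne q$.

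First I would rewrite
\begin{equation*}
\sum_{p+q=2\ell+1}h^{p,q}_{BC}\;=\;\sum_{p=0}^{\ell}\left(h^{p,2\ell+1-p}_{BC}+h^{2\ell+1-p,p}_{BC}\right)\;=\;2\,\sum_{p=0}^{\ell}h^{p,2\ell+1-p}_{BC},
\end{equation*}
which is manifestly even. The same argument, with $A$ in place of $BC$, yields that $\sum_{p+q=2\ell+1}h^{p,q}_{A}$ is also even. Since $2\,b_{2\ell+1}$ is obviously even, adding and subtracting gives
\begin{equation*}
\Delta^{2\ell+1}\;=\;2\,\sum_{p=0}^{\ell}\left(h^{p,2\ell+1-p}_{BC}+h^{p,2\ell+1-p}_{A}\right)-2\,b_{2\ell+1}\;\equiv\;0\mod 2,
\end{equation*}
as desired.

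There is no real obstacle: the statement is purely a parity count that follows from the conjugation symmetry of the Bott--Chern and Aeppli numbers. The only thing to be careful about is the initial observation that $p+q$ odd forces $p\ne q$, which is what makes the pairing $(p,q)\leftrightarrow(q,p)$ free of fixed points and hence produces an even sum.
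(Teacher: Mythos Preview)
Your proof is correct and follows essentially the same approach as the paper: both exploit the conjugation symmetries $h^{p,q}_{BC}=h^{q,p}_{BC}$ and $h^{p,q}_{A}=h^{q,p}_{A}$ together with the fact that $p+q$ odd forces $p\neq q$, so that the diagonal contributes nothing and the off-diagonal terms pair up. The only cosmetic difference is that the paper indexes the half-sum by the condition $p<q$ rather than by $p\in\{0,\dots,\ell\}$, which amounts to the same thing.
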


\begin{proof}
 Since
 \begin{eqnarray*}
 \Delta^{2k+1} &=& h^{2k+1}_{BC}+h^{2k+1}_{A} - 2\, b_{2k+1} \\[5pt]
 &=& 2\, \left(\sum_{\substack{p+q=2k+1\\p<q}} \left( h^{p,q}_{BC} + h^{p,q}_{A} \right) - b_{2k+1}\right) \;\equiv\; 0\mod 2 \;,
 \end{eqnarray*}
 the proposition follows.
\end{proof}

\begin{prop}
 Let $X$ be a nilmanifold endowed with a complex structure, and suppose that the complex dimension $n$ of $X$ is odd. Then, $\Delta^{n} \equiv 0 \mod 4$.
\end{prop}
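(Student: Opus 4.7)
The plan is to write $\Delta^n$ as four times an integer by combining three ingredients: the duality $h^k_{BC}=h^{2n-k}_A$ specialised at $k=n$, the complex-conjugation symmetry $h^{p,q}_{BC}=h^{q,p}_{BC}$, and the vanishing of the topological Euler characteristic of a nilmanifold.

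First I would invoke the identity $h^k_{BC}=h^{2n-k}_A$ recalled at the opening of this section, evaluated at $k=n$, to obtain $h^n_{BC}=h^n_A$. Substituting into the definition of $\Delta^n$ yields
$$\Delta^n \;=\; h^n_{BC}+h^n_A-2\,b_n \;=\; 2\,\bigl(h^n_{BC}-b_n\bigr),$$
so the statement reduces to showing that $h^n_{BC}-b_n$ is even.

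Next I would treat the two summands separately. Because $n$ is odd, no bidegree $(p,q)$ with $p+q=n$ satisfies $p=q$; pairing $(p,q)$ with $(q,p)$ and using the conjugation symmetry of $H^{\bullet,\bullet}_{BC}$ gives
$$h^n_{BC} \;=\; 2\sum_{\substack{p+q=n\\p<q}} h^{p,q}_{BC},$$
so $h^n_{BC}$ is even. For $b_n$, I would use that any nilmanifold $X=\Gamma\backslash G$ is parallelizable---a basis of left-invariant vector fields on $G$ descends to a global frame of $TX$---so $\chi_{\text{top}}(X)=0$ by Poincar\'e--Hopf (or Chern--Gauss--Bonnet). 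Plugging $\chi_{\text{top}}=0$ into the identity \eqref{eq:euler}, namely $b_n=(-1)^n\chi_{\text{top}}+2\sum_{k=0}^{n-1}(-1)^{n-k-1}b_k$, shows that $b_n$ is even.

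Combining the two parities, $h^n_{BC}-b_n$ is even, whence $\Delta^n\equiv 0\pmod 4$. There is no genuine obstacle: the whole argument is a parity chase through the symmetries of Bott--Chern/Aeppli cohomology, together with the (well-known) vanishing of $\chi_{\text{top}}$ on a nilmanifold; the role of the hypothesis that $n$ is odd is exactly to force $h^n_{BC}=h^n_A$ and to rule out any diagonal $(p,p)$ contribution to $h^n_{BC}$.
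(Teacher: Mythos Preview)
Your argument is correct and follows essentially the same route as the paper: reduce $\Delta^n$ to $2\,(h^n_{BC}-b_n)$ via the middle-degree duality $h^n_{BC}=h^n_A$, then use conjugation symmetry (with $n$ odd, hence no diagonal bidegree) to make $h^n_{BC}$ even, and the vanishing of $\chi_{\text{top}}$ together with \eqref{eq:euler} to make $b_n$ even. One small correction to your closing remark: the equality $h^n_{BC}=h^n_A$ comes from the duality $h^k_{BC}=h^{2n-k}_A$ and holds for any $n$; the oddness of $n$ is used only to exclude a diagonal $(p,p)$ contribution.
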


\begin{proof}
 Since the Euler-Poincaré characteristic $\chi_{\text{top}}$ of a nilmanifold vanishes, one has
 \begin{eqnarray*}
 \Delta^{n} &=& h^{n}_{BC} + h^{n}_{A} - 2\, b_n \;=\; 2\, h^n_{BC} - 2\, b_n \\[5pt]
 &=& 4\, \left( \sum_{\substack{p+q=n\\p<q}} h^{p,q}_{BC} - \sum_{k=0}^{n-1} \left(-1\right)^{n-k-1}\, b_k \right) \;\equiv\; 0 \mod 4 \;,
 \end{eqnarray*}
 from which the proposition follows.
\end{proof}

\begin{prop}
 Let $X$ be a compact complex manifold of complex dimension $n$. If either
 \begin{itemize}
  \item $\Delta^k=0$ for every $k\equiv n \mod 2$, and
  \item for every $k\in\N$, it holds that $\sum_{p+q=2k+1} \dim_\C \frac{\imm\delbar \cap \imm\del \cap \wedge^{p,q}X}{\imm\del\delbar} = 0$,
 \end{itemize}
 or
 \begin{itemize}
  \item $\Delta^k=0$ for every $k\equiv n-1 \mod 2$, and
  \item for every $k\in\N$, it holds that $\sum_{p+q=2k} \dim_\C \frac{\imm\delbar \cap \imm\del \cap \wedge^{p,q}X}{\imm\del\delbar} = 0$;
 \end{itemize}
 then $X$ satisfies the $\del\delbar$-Lemma.
\end{prop}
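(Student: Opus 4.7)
The plan is to reduce the conclusion to the numerical criterion of \cite[Theorem B]{angella-tomassini-3}, which asserts that $X$ satisfies the $\del\delbar$-Lemma if and only if $\Delta^k=0$ for every $k\in\N$. Hence it suffices to show that the stated hypotheses force the vanishing of every $\Delta^k$. Abbreviating $e^{p,q}:=\dim_\C\frac{\imm\delbar\cap\imm\del\cap\wedge^{p,q}X}{\imm\del\delbar}$ and $E^{k}:=\sum_{p+q=k}e^{p,q}$, the core of the argument will be a refinement of the Fr\"olicher-type inequality of \cite{angella-tomassini-3}, namely the identity
$$ \Delta^k \;=\; 2\,\bigl(h^{k}_{\delbar}-b_k\bigr)+E^{k}, $$
which, in view of $h^{k}_{\delbar}\geq b_k$ and $E^{k}\geq 0$, already entails the pointwise bound $\Delta^k\geq E^{k}\geq 0$.

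First I would establish this identity by a direct dimension count on the Hodge-type decompositions of $H^{p,q}_{BC}$, $H^{p,q}_{A}$, $H^{p,q}_{\del}$, $H^{p,q}_{\delbar}$ in each bidegree $(p,q)$. The main ingredients are: (i) the inclusion--exclusion $\dim(\imm\del+\imm\delbar)=\dim\imm\del+\dim\imm\delbar-\dim(\imm\del\cap\imm\delbar)$ together with $\dim(\imm\del\cap\imm\delbar)=e^{p,q}+\dim\imm\del\delbar$; (ii) the natural isomorphism induced by $\delbar$ between $\ker\del\delbar/\ker\delbar$ in bidegree $(p,q)$ and $\imm\delbar\cap\ker\del$ in bidegree $(p,q+1)$, and its conjugate counterpart with $\del$; (iii) the identity $h^{k}_{\del}=h^{k}_{\delbar}$ coming from complex conjugation. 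Summation over $p+q=k$ then produces the displayed formula.

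Next I would alternate-sum this identity. Since the Fr\"olicher spectral sequence preserves the topological Euler characteristic, $\sum_{k}(-1)^{k}h^{k}_{\delbar}=\chi_{\text{top}}=\sum_{k}(-1)^{k}b_k$, the Fr\"olicher defects cancel and one obtains
$$ \sum_{k}(-1)^{k}\Delta^k \;=\; \sum_{k}(-1)^{k}E^{k}. $$
Under the first set of hypotheses, $\Delta^k=0$ for $k\equiv n \mod 2$ and $E^{k}=0$ for $k$ odd; splitting each side of the displayed identity by parity, the surviving contributions on the two sides carry opposite overall sign, and since every $\Delta^k$ and every $E^{k}$ is non-negative, both sides must vanish. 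This forces $\Delta^k=0$ for all $k$ of the remaining parity and hence, together with the hypothesis, $\Delta^k=0$ for every $k\in\N$. The second set of hypotheses is handled symmetrically, with the roles of even and odd parities exchanged. The $\del\delbar$-Lemma then follows from \cite[Theorem B]{angella-tomassini-3}.

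I expect the main obstacle to lie in the bidegree-wise dimension count of the first step: keeping precise track of the numerous intersections and quotients among $\ker\del$, $\ker\delbar$, $\imm\del$, $\imm\delbar$, $\ker\del\delbar$, $\imm\del\delbar$ takes some care. Once this identity is in hand, the subsequent alternating-sum and parity-splitting manipulations are elementary.
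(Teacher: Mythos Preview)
Your overall strategy---derive a pointwise identity expressing $\Delta^k$ as the Fr\"olicher defect $2(h^k_{\delbar}-b_k)$ plus nonnegative ``correction terms'', then take alternating sums and exploit parity to conclude via \cite[Theorem B]{angella-tomassini-3}---is exactly the route the paper takes. But your key identity is wrong as stated: the correct formula is
\[
\Delta^k \;=\; 2\bigl(h^k_{\delbar}-b_k\bigr) + a^k + f^k,
\]
where $a^k:=\sum_{p+q=k}\dim_\C\dfrac{\imm\del\cap\imm\delbar}{\imm\del\delbar}$ is your $E^k$, and there is a second, \emph{distinct} correction $f^k:=\sum_{p+q=k}\dim_\C\dfrac{\ker\del\delbar}{\ker\del+\ker\delbar}$ that you have dropped. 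Your own ingredients actually produce both terms: inclusion--exclusion on images yields $a^{p,q}$, while inclusion--exclusion on kernels (comparing $\ker\del\cap\ker\delbar$ and $\ker\del+\ker\delbar$ inside $\ker\del\delbar$) yields $f^{p,q}$. The paper obtains this identity by quoting Varouchas' exact sequences, and then proves a Hodge-theoretic duality $A^{p,q}\cong\bigl(F^{n-q,n-p}\bigr)^*$ via the $\tilde\Delta_{BC}$-harmonic representative and the pairing $(\alpha,\beta)\mapsto\int_X\alpha\wedge\overline\beta$; this gives $f^k=a^{2n-k}$, which is what turns $\sum_k(-1)^k(a^k+f^k)$ into $2\sum_k(-1)^ka^k$. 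Without introducing $f^k$ and this duality your alternating-sum identity $\sum_k(-1)^k\Delta^k=\sum_k(-1)^kE^k$ simply does not hold (it is off by a factor of~$2$, and more importantly cannot be derived from the pointwise formula you wrote).

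A second point: your parity sentence (``the surviving contributions on the two sides carry opposite overall sign'') is not automatic. Under the first hypothesis with $n$ odd, the surviving $\Delta^k$ (for $k$ even) and the surviving $a^k$ (also $k$ even) carry the \emph{same} sign in $\sum_k(-1)^k(\cdots)$, so nonnegativity alone does not force vanishing. You need to arrange the identity so that the $\Delta$-terms of parity $\equiv n$ sit together with the $a$-terms of parity $\not\equiv n$ on one side; this uses the duality $f^k=a^{2n-k}$ in an essential way (it is what places $a^{2n-k}$, of the opposite parity to $k$, next to $\Delta^k$). Handle the four parity combinations explicitly rather than asserting symmetry.
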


\begin{proof}
 Consider the bi-graded finite-dimensional $\C$-vector spaces
 $$ A^{\bullet,\bullet}\left(X\right) \;:=\; \frac{\imm\delbar\cap \imm\del}{\imm\del\delbar} \qquad \text{ and } \qquad F^{\bullet,\bullet}\left(X\right) \;:=\; \frac{\ker\del\delbar}{\ker\delbar+\ker\del} \;, $$
 and denote, for every $(p,q)\in\N^2$ and $k\in\N$,
 $$ a^{p,q} \;:=\; \dim_\C A^{p,q}\left(X\right) \qquad \text{ and } \qquad a^k \;:=\; \sum_{p+q=k} \dim_\C A^{p,q}\left(X\right) $$
 and
 $$ f^{p,q} \;:=\; \dim_\C F^{p,q}\left(X\right) \qquad \text{ and } \qquad f^k \;:=\; \sum_{p+q=k} \dim_\C F^{p,q}\left(X\right) \;.$$

 Following the same argument used in \cite{schweitzer} to prove the duality between Bott-Chern and Aeppli cohomology groups, we prove the duality between $A^{\bullet,\bullet}$ and $F^{\bullet,\bullet}$; it follows that, for any $\left(p,q\right)\in\N^2$ and $k\in\N$, the equalities
 $$ a^{p,q} \;=\; a^{q,p} \;=\; f^{n-p,n-q} \;=\; f^{n-q,n-p} \qquad \text{ and } \qquad a^k \;=\; f^{2n-k} $$
 hold.

 Indeed, note that the pairing
 $$ A^{\bullet,\bullet}\times F^{\bullet, \bullet} \to \C \;, \qquad \left(\left[\alpha\right],\, \left[\beta\right]\right) \mapsto \int_X \alpha\wedge \overline \beta \;, $$
 is non-degenerate: choose a Hermitian metric $g$ on $X$; if $\left[\alpha\right]\in A^{\bullet,\bullet}\subseteq H^{\bullet,\bullet}_{BC}(X)$, then there exists a $\tilde\Delta_{BC}$-harmonic representative $\tilde\alpha$ in $\left[\alpha\right]\in A^{\bullet,\bullet}$, by \cite[Corollaire 2.3]{schweitzer}, that is, $\del\tilde\alpha=\delbar\tilde\alpha=\del\delbar*\tilde\alpha=0$; hence, $\left[*\tilde\alpha\right]\in F^{\bullet,\bullet}$, and $\left(\left[\tilde\alpha\right],\, \left[*\tilde\alpha\right]\right)=\int_X \tilde\alpha\wedge \overline{*\tilde\alpha}$ is zero if and only if $\tilde\alpha$ is zero if and only if $\left[\alpha\right]\in A^{\bullet,\bullet}$ is zero.

 We further note that, by J. Varouchas' exact sequences, \cite[\S3.1]{varouchas}, one has the following equality, see \cite[\S3]{angella-tomassini-3}, for any $k\in\N$:
 \begin{eqnarray}\label{eq:bc-dolbeault}
  h^k_{BC} + h^k_{A} \;=\; 2\, h^k_{\delbar}+a^k+f^k \;.
 \end{eqnarray}

 By using \eqref{eq:bc-dolbeault} and \eqref{eq:euler}, one gets
 \begin{eqnarray*}
  \Delta^n &=& h^n_{BC}+h^n_{A} - 2\, b_n \;=\; 2\, h^n_{\delbar} + a^n + f^n - 2\, b_n \\[5pt]
           &=& 2\, \sum_{k=0}^{n-1} \left(-1\right)^{n-k-1}\, \left(2\, h^k_{\delbar}-2\, b_k\right) + a^n + f^n \\[5pt]
           &=& 2\, \sum_{k=0}^{n-1} \left(-1\right)^{n-k-1}\, \left(h^k_{BC}+h^k_{A}-2\, b_k\right) + 2\, \sum_{k=0}^{n-1} \left(-1\right)^{n-k}\, \left(a^k+f^k\right) + \left( a^n + f^n \right) \\[5pt]
           &=& 2\, \sum_{k=0}^{n-1} \left(-1\right)^{n-k-1}\, \Delta^k + 2\, \sum_{k=0}^{2n} \left(-1\right)^{n-k}\, a^k \;;
 \end{eqnarray*}
 in particular, it follows that
 $$ \sum_{k\equiv n \mod 2}\Delta^k + 2\, \sum_{k=1}^{n} a^{2k-1} \;=\; \sum_{k\equiv n-1 \mod 2} \Delta^k + 2\, \sum_{k=0}^{n} a^{2k} \;. $$

 Hence, the proposition follows by using \cite[Theorem B]{angella-tomassini-3}.
\end{proof}

In particular, by using the K. Hasegawa theorem \cite[Theorem 1, Corollary]{hasegawa}, one gets straightforwardly the following corollary.

\begin{cor}
 Let $X=\left.\Gamma\right\backslash G$ be a $6$-dimensional nilmanifold endowed with a complex structure. If either
 $$ \Delta^1 \;=\; \Delta^3 \;=\; 0 \qquad \text{ and } \qquad \dim_\C \frac{\imm\delbar \cap \imm\del \cap \wedge^{2,1}X}{\imm\del\delbar} \;=\; 0 \;, $$
 or
 $$ \Delta^2 \;=\; 0 \qquad \text{ and } \qquad \dim_\C \frac{\imm\delbar \cap \imm\del \cap \wedge^{1,1}X}{\imm\del\delbar} \;=\; 0 \;,$$
 then $X$ is diffeomorphic to a torus.
\end{cor}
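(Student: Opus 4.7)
The plan is to apply the preceding proposition to establish the $\del\delbar$-Lemma on $X$, and then to invoke K. Hasegawa's theorem \cite[Theorem 1, Corollary]{hasegawa}, by which a formal nilmanifold is diffeomorphic to a torus.

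I would start by verifying that the hypotheses listed in the corollary imply those of the preceding proposition in complex dimension $n=3$. Using the symmetry $\Delta^k = \Delta^{2n-k}$ (a direct consequence of the Serre-type identity $h^{p,q}_{BC} = h^{n-q,n-p}_{A}$ and of Poincaré duality $b_k = b_{2n-k}$), the vanishing of $\Delta^1 = \Delta^3 = 0$ in case~(i) (respectively $\Delta^2 = 0$ in case~(ii)) extends to the vanishing of $\Delta^k$ for every odd (respectively even) $k \in \{0,\dots,6\}$. The conjugation symmetry $a^{p,q} = a^{q,p}$, induced by the real structure on $A^{\bullet,\bullet}$, further extends the vanishing of $a^{1,0}, a^{3,0}, a^{2,1}$ (respectively $a^{2,0}, a^{1,1}$) to $\sum_{p+q=1} a^{p,q} = \sum_{p+q=3} a^{p,q} = 0$ (respectively $\sum_{p+q=0} a^{p,q} = \sum_{p+q=2} a^{p,q} = 0$).

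For the top-total-degree summands not directly covered, namely $p+q = 2n - 1 = 5$ in case~(i) and $p+q = 4$, $p+q = 2n = 6$ in case~(ii), I would invoke the non-degenerate pairing $A^{p,q} \times F^{n-q,n-p} \to \C$ exhibited in the proof of the preceding proposition. This pairing gives in particular $a^{3,3} = f^{0,0} = 0$ automatically, since the constants lie in $\ker\del \cap \ker\delbar$ and so $F^{0,0} = 0$, and it relates each remaining unknown $a^{p,q}$ to the corresponding $f^{n-q,n-p}$, whose vanishing can then be deduced from the explicit hypotheses of the corollary. Once all the hypotheses of the preceding proposition have been verified, that proposition yields that $X$ satisfies the $\del\delbar$-Lemma; by \cite[Main Theorem]{deligne-griffiths-morgan-sullivan} the manifold $X$ is then formal; and K. Hasegawa's theorem \cite[Theorem 1, Corollary]{hasegawa} forces a formal nilmanifold to be diffeomorphic to a torus.

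The main obstacle I anticipate is the bookkeeping in the second step: no direct conjugation or Serre-type symmetry identifies $a^{p,q}$ with an $a^{p',q'}$ of different total degree $p+q$, so the vanishings $\sum_{p+q=5} a^{p,q} = 0$ (in case~(i)) and $\sum_{p+q=4} a^{p,q} = 0$ (in case~(ii)) must be recovered indirectly, through the $A^{\bullet,\bullet}$--$F^{\bullet,\bullet}$ pairing together with the automatic low-degree vanishings such as $f^{0,0}=0$. The remaining steps are then immediate from the cited results.
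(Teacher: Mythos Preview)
Your overall strategy---apply the preceding proposition, conclude the $\del\delbar$-Lemma, then use \cite[Main Theorem]{deligne-griffiths-morgan-sullivan} and Hasegawa's theorem---is exactly the paper's, and you correctly locate the only nontrivial point: for $n=3$ the proposition needs $a^5=0$ (case~(i)) and $a^4=0$ (case~(ii)), which are not among the listed hypotheses.

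The gap is in how you propose to close this. The $A^{\bullet,\bullet}$--$F^{\bullet,\bullet}$ pairing gives you $a^{5}=f^{1}$ and $a^{4}=f^{2}$, but you then claim these $f^{p,q}$ ``can be deduced from the explicit hypotheses of the corollary.'' They cannot: the hypotheses constrain only certain $a^{p,q}$, and there is no symmetry sending $a^{p,q}$ to $f^{p,q}$ in the \emph{same} bidegree. Your argument for $f^{0,0}=0$ is fine, but it does not extend to $f^{1,0}$, $f^{2,0}$, or $f^{1,1}$.

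The missing observation is that the $\Delta$-hypotheses already do the work. From Varouchas' identity \eqref{eq:bc-dolbeault} one has
\[
\Delta^k \;=\; 2\left(h^k_{\delbar}-b_k\right)+a^k+f^k \;,
\]
and since $h^k_{\delbar}\geq b_k$ by the Fr\"olicher inequality while $a^k,f^k\geq 0$, the condition $\Delta^k=0$ forces $a^k=f^k=0$. In case~(i), $\Delta^1=0$ gives $f^1=0$, hence $a^5=f^1=0$; in case~(ii), $\Delta^2=0$ gives $f^2=0$, hence $a^4=f^2=0$ (and $a^6=f^0=0$ as you noted). With this in hand the proposition applies and the rest of your argument goes through. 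Incidentally, the same reasoning shows that the $a^{p,q}$-hypotheses listed in the corollary are themselves consequences of the $\Delta$-hypotheses.
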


\section{Pluriclosed metrics on $6$-dimensional nilmanifolds}

A very special class of metrics in Complex Geometry is provided by K\"ahler metrics, namely, by $J$-Hermitian metrics $g$ on a complex manifold $\left(X,\, J\right)$ such that the associated $(1,1)$-form $\omega:=g\left(J\,\sspace, \, \ssspace\right)$ is $\de$-closed. By Ch. Benson and C.~S. Gordon's theorem \cite[Theorem A]{benson-gordon-nilmanifolds}, or by K. Hasegawa's theorem \cite[Theorem 1, Corollary]{hasegawa}, if a nilmanifold admits a K\"ahler structure, then it is diffeomorphic to a torus. Hence, a natural question is to investigate the problem of the existence of special Hermitian metrics, other than the K\"ahler metrics, on nilmanifolds endowed with complex structures, and the possible connection with cohomological properties; for instance, given a complex manifold $\left(X,\, J\right)$ of complex dimension $n$, a Hermitian metric $g$ on $X$, with associated $(1,1)$-form $\omega$, is called \emph{balanced} if $\de\omega^{n-1}=0$, \cite{michelsohn}, and it is called \emph{pluriclosed} (or \emph{
strong K\"ahler with torsion}, shortly \emph{\textsc{skt}}) if $\del\delbar\omega=0$, \cite{bismut--math-ann-1989}. In particular, we study here pluriclosed metrics.

\medskip

The following result points out which of the complex manifolds in Table \ref{table:bott-chern-6} admit pluriclosed metrics.

\begin{thm}\label{thm:existence-skt}
 Let $X$ be a $6$-dimensional nilmanifold and $J$ be an invariant complex structure on $X$. Then $\left(X,\, J\right)$ admits an invariant pluriclosed metric if and only if
 \begin{enumerate}
  \item[{\bfseries [00]}]  $\left(X,\, J\right)=\left(\mathfrak{h}_1,\, J\right)$;\vspace{5pt}
  \item[{\bfseries [01b]}] $\left(X,\, J\right)=\left(\mathfrak{h}_2,\, J_1^D\right)$ with $D \in \{ \im, - \im \}$;\vspace{5pt}
  \item[{\bfseries [02b]}] $\left(X,\, J\right)=\left(\mathfrak{h}_2,\, J_2^D\right)$ with $D\in\C$ such that $\Re D=1$ and $\Im D>0$;\vspace{5pt}
  \item[{\bfseries [06b]}] $\left(X,\, J\right)=\left(\mathfrak{h}_4,\, J_2^D\right)$ with $D=1$;\vspace{5pt}
  \item[{\bfseries [09b']}] $\left(X,\, J\right)=\left(\mathfrak{h}_5,\, J_3^{\left(\lambda,\, D\right)}\right)$ with $\lambda=0$ and $D=\frac{1}{2}+\im y$ such that $0 < 4y^2 < 3$;\vspace{5pt}
  \item[{\bfseries [09c]}] $\left(X,\, J\right)=\left(\mathfrak{h}_5,\, J_3^{\left(\lambda,\, D\right)}\right)$ with $\lambda=0$ and $D=\frac{1}{2}$;\vspace{5pt}
  \item[{\bfseries [12]}]  $\left(X,\, J\right)=\left(\mathfrak{h}_8,\, J\right)$,
 \end{enumerate}
 where we use the notation in \S\ref{subsec:inv-cplx-str} and \S\ref{sec:bc-coh}, and in \cite{ceballos-otal-ugarte-villacampa}.
\end{thm}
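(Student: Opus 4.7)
By \cite[Theorem 1.2]{fino-parton-salamon}, recalled in the introduction, on a $6$-dimensional nilmanifold $\X$ endowed with a left-invariant complex structure $J$ the pluriclosed condition holds for either every left-invariant Hermitian metric or none. Since the positive $(1,1)$-forms real-linearly span the full nine-dimensional space of real left-invariant $(1,1)$-forms and $\del\delbar$ is $\C$-linear, this reformulates the theorem as the purely Lie-algebraic statement
$$
\del\delbar\colon\wedge^{1,1}\duale{\g}_{\C}\longrightarrow\wedge^{2,2}\duale{\g}_{\C} \qquad \text{is the zero map},
$$
which in turn amounts to $\del\delbar\,\omega^{i\bar{j}}=0$ for all $i,j\in\{1,2,3\}$, where $\{\omega^{1},\omega^{2},\omega^{3}\}$ is the left-invariant coframe of the classification in \cite{ceballos-otal-ugarte-villacampa}, see Table~\ref{table:classification}. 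The plan is then a case-by-case verification of this algebraic condition across the $26$ families in Table~\ref{table:classification}.

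\textbf{Structure of the computation.} In each family, the equations of Table~\ref{table:classification} present $\de\omega^{k}$ as a combination of monomials $\omega^{ab}$ (type $(2,0)$) and $\omega^{a\bar b}$ (type $(1,1)$), so that $\del\omega^{k}$ and $\delbar\omega^{k}$ can be read off directly, and conjugation provides $\del\bar\omega^{k}$ and $\delbar\bar\omega^{k}$. A Leibniz expansion
$$
\del\delbar\!\left(\omega^{i}\wedge\bar\omega^{j}\right)\;=\;\del\delbar\omega^{i}\wedge\bar\omega^{j}\,+\,\delbar\omega^{i}\wedge\del\bar\omega^{j}\,-\,\del\omega^{i}\wedge\delbar\bar\omega^{j}\,+\,\omega^{i}\wedge\del\delbar\bar\omega^{j}
$$
then exhibits each $\del\delbar\,\omega^{i\bar{j}}$ as a polynomial in the complex-structure parameters $\lambda,B,c,D$ multiplying an element of a $\C$-basis of $\wedge^{2,2}\duale{\g}_{\C}$. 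Requiring all nine such polynomials to vanish is a finite algebraic system in the parameters, whose solution set can be extracted by elementary algebra in each family. Note that, in Table~\ref{table:classification}, no $\de\omega^{k}$ carries a $(0,2)$-component, so $\delbar\bar\omega^{k}=0$ throughout and the third summand on the right-hand side above always drops.

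\textbf{Case analysis.} The abelian case $\mathfrak{h}_{1}$ ([00]) is trivial since $\de=0$; for $\mathfrak{h}_{8}$ ([12]) the only non-zero piece is $\delbar\omega^{3}=\omega^{1\bar 1}$, and the identity $\omega^{1\bar 1}\wedge\omega^{1\bar 1}=0$ eliminates every potentially non-trivial term. The parameter-dependent families on $\mathfrak{h}_{2}$, $\mathfrak{h}_{4}$ and $\mathfrak{h}_{5}$ reduce the pluriclosed equations, respectively, to $D=\im$ (case [01b]), $\Re D=1$ with $\Im D>0$ (case [02b]), $D=1$ (case [06b]), and to the two sub-regions [09b'] and [09c] inside the admissibility domain of $J_{3}^{(\lambda,D)}$ on $\mathfrak{h}_{5}$. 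Every remaining family in Table~\ref{table:classification} is ruled out by exhibiting at least one $\del\delbar\,\omega^{i\bar{j}}$ whose scalar coefficient, viewed as a polynomial in the parameters, cannot vanish without leaving the admissibility region of \cite{ceballos-otal-ugarte-villacampa}.

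\textbf{Main obstacle.} The principal difficulty is the sheer volume of algebra, particularly for $J_{3}^{(\lambda,D)}$ on $\mathfrak{h}_{5}$, whose admissibility domain is already split into four sub-regions in \cite{ceballos-otal-ugarte-villacampa}, and for the families parameterised by $B\in\C$, which require separating $\Re B$ from $\Im B$. The symbolic-computation support mentioned in \S\ref{sec:bc-coh} is essentially indispensable here, both to organise the nine polynomial equations in each family and to verify that the parameter values selected by the pluriclosed equations remain inside the admissibility domain prescribed by the classification.
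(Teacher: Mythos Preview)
Your approach is correct and rests on the same key reduction, namely \cite[Theorem 1.2]{fino-parton-salamon}, but the paper's proof takes two shortcuts that make it considerably leaner. First, the paper immediately invokes \cite[Theorem 3.2]{fino-parton-salamon} (or \cite[Theorem 3.3]{ugarte}) to restrict attention to the five Lie algebras $\mathfrak{h}_1,\mathfrak{h}_2,\mathfrak{h}_4,\mathfrak{h}_5,\mathfrak{h}_8$, so the remaining twenty-one families in Table~\ref{table:classification} need not be touched at all. Second, rather than reformulating the problem as ``$\del\delbar=0$ on all of $\wedge^{1,1}\duale{\g}_\C$'' and then checking nine generators $\omega^{i\bar j}$, the paper simply evaluates $\del\delbar\big(\sum_{j=1}^{3}\omega^{j\bar j}\big)$ for the standard diagonal metric; by the all-or-none dichotomy this single scalar computation already decides the question, and in each surviving family the answer is a multiple of $\omega^{12\bar1\bar2}$ whose coefficient is a simple polynomial in the parameters (Table~\ref{table:skt-6}). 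Your route is logically sound and more self-contained in that it does not appeal to the prior classification of pluriclosed Lie algebras, but it pays for this by roughly an order of magnitude more algebra---nine forms per family over twenty-six families---which is precisely the ``main obstacle'' you flag. Since you are already willing to cite \cite[Theorem 1.2]{fino-parton-salamon}, also citing \cite[Theorem 3.2]{fino-parton-salamon} and then testing a single metric collapses the computation dramatically.
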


\begin{proof}
It is well known that, if a $6$-dimensional nilmanifold endowed with an invariant complex structure admits an invariant pluriclosed metric, then its Lie algebra is isomorphic to $\mathfrak{h}_1$, $\mathfrak{h}_2$, $\mathfrak{h}_4$, $\mathfrak{h}_5$, or $\mathfrak{h}_8$, see \cite[Theorem 3.2]{fino-parton-salamon}, or \cite[Theorem 3.3]{ugarte}. Moreover, one knows that, on a $6$-dimensional nilmanifold endowed with an invariant complex structure, the pluriclosed condition is satisfied by either all the invariant Hermitian metrics or by none, \cite[Theorem 1.2]{fino-parton-salamon}.
Hence we consider only the complex structures on $\mathfrak{h}_1$, $\mathfrak{h}_2$, $\mathfrak{h}_4$, $\mathfrak{h}_5$, $\mathfrak{h}_8$ and test the pluriclosed condition on the standard invariant Hermitian metric $g$ with fundamental form $\sum_{j=1}^{3}\omega^j\wedge\bar\omega^j$, where $\left\{\omega^j\right\}_{j\in\{1,2,3\}}$ is a given invariant coframe for $T^{1,0}X$, according to the notation in \cite{ceballos-otal-ugarte-villacampa}.
Since the first case, $\mathfrak{h}_1$, is the K\"ahlerian one, we focus our attention on non-toric nilmanifolds, and, after a simple computation, we get the results summarized in Table \ref{table:skt-6}.
Imposing the pluriclosed condition on the standard metric and matching with the conditions in Table \ref{table:bott-chern-6}, we get the proposition.
\end{proof}

\begin{rem}
 As a consequence of the computations summarized in Table \ref{table:bott-chern-6} and of Theorem \ref{thm:existence-skt}, we note that, restricting to the class of $6$-dimensional nilmanifolds endowed with invariant pluriclosed structures, the number $h^{p,q}_{BC}$ depends just on the Lie algebra whenever $(p,q)\not\in\left\{(3,1),\, (2,2),\, (1,3)\right\}$, see also Remark \ref{rem:stab-bc}.
\end{rem}

With the aim of studying cohomological properties in relation to the existence of special metrics, we provide the following example, which shows a curve of complex structures admitting pluriclosed metrics and with jumping Bott-Chern cohomology numbers.

\begin{ex}[\itshape A curve of complex structures on a $6$-dimensional nilmanifold such that each complex structure admits a pluriclosed metric and the dimensions of the Bott-Chern cohomology groups are not constant]
Let $X$ be a nilmanifold with associated Lie algebra
$$ \mathfrak{h}_2 \;:=\; \left(0^4,\; 12,\; 34\right) \;. $$
Consider the family $\left\{J_t\right\}_{t\in\R}$ of invariant complex structures on $X$ such that an invariant coframe for the $\mathcal{C}^\infty\left(X\right)$-module of the $(1,0)$-forms on $\left(X,\, J_t\right)$ is given by $\left\{\psi_t^1,\, \psi_t^2,\, \psi_t^3\right\}$ satisfying
$$
\left\{
\begin{array}{rcl}
 \de \psi_t^1 &=& 0 \\[5pt]
 \de \psi_t^2 &=& 0 \\[5pt]
 \de \psi_t^3 &=& t\, \psi_t^1\wedge\psi_t^2 + \psi_t^1\wedge\bar\psi_t^1 + t\, \psi_t^1\wedge\bar\psi_t^2 + \left(t^2+\im\right)\, \psi_t^2\wedge\bar\psi_t^2
\end{array}
\right. \;.
$$
Note that
\begin{itemize}
 \item for $t=0$, the invariant complex structure $J_t$ is equivalent to the invariant Abelian complex structure $J_1^D$ with $D=\im$ (case \textbf{01b} in Table \ref{table:bott-chern-6});
 \item for $t\neq 0$, the invariant complex structure $J_t$ is equivalent to the invariant non-Abelian complex structure $J_2^D$ with $D=1+\frac{1}{t^2}\,\im$ (case \textbf{02b} in Table \ref{table:bott-chern-6}),
\end{itemize}
where we use the notation in \S\ref{subsec:inv-cplx-str} and \S\ref{sec:bc-coh}, and \cite{ceballos-otal-ugarte-villacampa}.

In particular, by Theorem \ref{thm:existence-skt}, $\left(X,\, J_t\right)$ admits an invariant pluriclosed metric for every $t\in\R$. Notwithstanding, note that
$$ \dim_\C H^{3,1}_{BC}\left(X,J_0\right) \;=\; \dim_\C H^{1,3}_{BC}\left(X,J_0\right) \;=\; 3 \;, $$
and
$$ \dim_\C H^{3,1}_{BC}\left(X,J_t\right) \;=\; \dim_\C H^{1,3}_{BC}\left(X,J_t\right) \;=\; 2 \qquad \text{ for } t\neq 0 \;, $$
while, for $\left(p,q\right)\not\in\left\{\left(3,1\right),\, \left(1,3\right)\right\}$ one has $\dim_\C H^{p,q}_{BC}\left(X,J_0\right) = \dim_\C H^{p,q}_{BC}\left(X,J_t\right)$ for every $t\in\R$.
\end{ex}
\begin{rem}
 An analogous example is provided by a nilmanifold $X$ with associated Lie algebra $\mathfrak{h}_5$ and endowed with 
the family $\left\{J_t := J_3^{\left(0,\, \frac{1}{2}+\im\, t\right)}\right\}_{t \geq 0 ,\, 4\,t^2 < 3}$ of invariant complex structures on $X$ such that, with respect to an invariant coframe $\left\{\psi_t^1,\, \psi_t^2,\, \psi_t^3\right\}$ of $T^{1,0}X$, one has
$$
\left\{
\begin{array}{rcl}
 \de \psi_t^1 &=& 0 \\[5pt]
 \de \psi_t^2 &=& 0 \\[5pt]
 \de \psi_t^3 &=& \psi_t^1\wedge\psi_t^2 + \psi_t^1\wedge\bar\psi_t^1 + \left(\frac12+\im t\right) \psi_t^2\wedge\bar\psi_t^2,
\end{array}
\right.
$$ 
(case \textbf{09} in Table \ref{table:classification}); in fact, there exists a pluriclosed metric for every $t$ such that $t\geq 0$ and $4\, t^2 < 3$, and we have the jumping of 
$$ \dim_\C H^{2,2}_{BC}\left(X,J_0\right)\;=\; 8\;\neq\;7\;=\;\dim_\C H^{2,2}_{BC}\left(X,J_t\right) \quad \text{ for } 0 < t < \frac{\sqrt{3}}{2} $$
(cases \textbf{09c} and \textbf{09b'} in Table \ref{table:bott-chern-6}).
\end{rem}

\begin{rem}
 Note that cases \textbf{09b'} and \textbf{09b''} provide examples of linear integrable non-equivalent complex structures on $\mathfrak{h}_5$ having the same dimensions of the Bott-Chern cohomology groups but admitting, respectively not admitting pluriclosed metrics.
\end{rem}

For the sake of completeness, we provide the following example, compare also \cite[Theorem 5.9]{ceballos-otal-ugarte-villacampa}: it shows a curve of complex structures on a $6$-dimensional nilmanifold admitting both pluriclosed and balanced metrics.

\begin{ex}[\itshape A curve of compact complex manifolds endowed with special metrics]
Let $X$ be a nilmanifold with associated Lie algebra
$$ \mathfrak{h}_4 \;:=\; \left(0^4,\; 12,\; 14+23\right) \;, $$
and consider on it the family $\left\{J_2^D\right\}_{D\in\R\setminus\{0\}}$ of invariant non-Abelian complex structures whose $(1,0)$-forms are generated by $\left\{\omega_D^1,\, \omega_D^2,\, \omega_D^3\right\}$ as a $\mathcal{C}^\infty(X;\C)$-module and such that
$$
\left\{
\begin{array}{rcl}
 \de \omega^1_D &:=& 0 \\[5pt]
 \de \omega^2_D &:=& 0 \\[5pt]
 \de \omega^3_D &:=& \omega^{12}_D + \omega^{1\bar1}_D + \omega^{1\bar2}_D + D\,\omega^{2\bar2}_D
\end{array}
\right. \;.
$$

By \cite[Theorem 3.2]{fino-parton-salamon} and \cite[Theorem 26]{ugarte}, the Lie algebra $\mathfrak{h}_4$ admits both pluriclosed and balanced invariant Hermitian structures.

Let
$$ \Omega_D^{r,s,t,u,v,z} \;:=\; \im\left(r^2\,\omega_D^{1\bar1}+s^2\,\omega_D^{2\bar2}+t^2\,\omega_D^{3\bar3}\right)+u\,\omega_D^{1\bar2}-\bar u\,\omega_D^{2\bar1}+v\,\omega_D^{2\bar3}-\bar v\,\omega_D^{3\bar2}+z\,\omega_D^{1\bar3}-\bar z\,\omega_D^{3\bar1} $$
the generic $J_2^D$-Hermitian structure, where
$$ r,\;s,\;t \;\in\; \R\setminus\{0\} \qquad \text{ and } u,\;v,\;z \;\in\; \C $$
satisfy
$$
\left\{
\begin{array}{l}
 r^2\,s^2 \;>\; \left|u\right|^2 \\[5pt]
 s^2\,t^2 \;>\; \left|v\right|^2 \\[5pt]
 r^2\,t^2 \;>\; \left|z\right|^2 \\[5pt]
 r^2\,s^2\,t^2+2\,\Re\left(\im\,\bar u\,\bar v\,z\right) \;>\; t^2\,\left|u\right|^2+r^2\,\left|v\right|^2+s^2\,\left|z\right|^2
\end{array}
\right. \;,
$$
see, e.g., \cite[pages 3-4]{ugarte-villacampa}.

By computing
$$ \del\delbar\, \Omega_D^{r,s,t,u,v,z} \;=\; \im \, t^2 \left(1-D\right) \, \omega_D^{12\bar1\bar2} \;, $$
we have that $\Omega_D^{r,s,t,u,v,z}$ is pluriclosed if and only if $D=1$, compare also \cite[Theorem 1.2]{fino-parton-salamon}.

Computing
$$ \de\left(\Omega_D^{r,s,t,u,v,z}\right)^2 \;=\; \frac{1}{2} \, \rho_D^{r,s,t,u,v,z} \, \omega^{123\bar1\bar2}_D - \frac{1}{2} \, \overline{\rho_D^{r,s,t,u,v,z}} \, \omega^{12\bar1\bar2\bar3}_D $$
where
$$ \rho_D^{r,s,t,u,v,z} \;:=\; D\,t^2\,r^2-D\,\left|v\right|^2+t^2\,s^2-\left|z\right|^2+\im\,t^2\,u+v\,\bar z \;, $$
we have that $\Omega_D^{r,s,t,u,v,z}$ is balanced if and only if $D$, $r$, $s$, $t$, $u$, $v$, $z$ are such that $\rho_D^{r,s,t,u,v,z}=0$.

Firstly, suppose that $0<D<\frac{1}{4}$. Let $s,t\in\R\setminus\{0\}$ be such that $s^2 > \left(D+s^2\right)^2$ and consider the $J_2^D$-Hermitian metric
$$ \Omega_D^{s,t} \;:=\; \Omega_D^{1,s,t,\im\, \left(D+s^2\right),0,0} \;=\; \im\left(\omega^{1\bar1}+s^2\,\omega^{2\bar2}+t^2\,\omega^{3\bar3}\right)+\im\, \left(D+s^2\right) \cdot \left(\omega^{1\bar2}+\omega^{2\bar1}\right) \;; $$
a straightforward computation shows that $\Omega_D^{s,t}$ is balanced, see also \cite[\S5]{ceballos-otal-ugarte-villacampa}.

In the case $D=\frac{1}{4}$, it turns out that $J_2^D$ admits no balanced metric, see also \cite[Theorem 5.9]{ceballos-otal-ugarte-villacampa}. Indeed, for the sake of completeness, we note that, if $\tilde\Omega$ were a balanced metric, then one should have
$$ 0 \;=\; \int_X \tilde\Omega^2 \wedge \de\left(\omega_{\frac{1}{4}}^{3} - \overline{\omega_{\frac{1}{4}}^{3}}\right) \;=\; \int_X \tilde\Omega^2 \wedge \left(\sqrt{2}\, \omega_{\frac{1}{4}}^1+\frac{\sqrt{2}}{2}\,\omega^2_{\frac{1}{4}}\right) \wedge \overline{\left(\sqrt{2}\, \omega_{\frac{1}{4}}^1+\frac{\sqrt{2}}{2}\,\omega^2_{\frac{1}{4}}\right)} \;>\; 0 \;, $$
which yields an absurd.

Summarizing, we have a curve such that:
\begin{itemize}
 \item for $0<D<\frac{1}{4}$, there exists a balanced metric on $\left(X,\, J_2^D\right)$;
 \item for $D=\frac{1}{4}$, the complex manifold $\left(X,\, J_2^{\frac{1}{4}}\right)$ admits no balanced metric and no pluriclosed metric;
 \item for $D=1$, there exists a pluriclosed metric on $\left(X,\, J_2^1\right)$.
\end{itemize}

Compare also \cite[Theorem 5.9]{ceballos-otal-ugarte-villacampa}, where the same example is studied in order to prove that the existence of balanced metrics and the existence of strongly-Gauduchon metrics are not closed under deformations of the complex structure.
\end{ex}

\begin{rem}
 For the study of the behaviour of the cohomology in relation to the existence of Hermitian balanced metrics or strongly-Gauduchon metrics, we refer to \cite{adela-luis-raquel} by A. Latorre, L. Ugarte, and R. Villacampa, where, for any balanced structure, they also determine L.-S. Tseng and S.-T. Yau's spaces parametrizing the deformations in type IIB supergravity, see \cite{tseng-yau-3}.
\end{rem}

\section{$8$-dimensional nilmanifolds and pluriclosed metrics}

While, for $6$-dimensional nilmanifolds endowed with an invariant complex structure, the condition that a given invariant metric is pluriclosed depends just on the associated Lie algebra and not on the chosen invariant Hermitian metric, \cite[Theorem 1.2]{fino-parton-salamon}, this holds no more true in dimension greater than $6$, see \cite[\S2]{rossi-tomassini}. The $8$-dimensional nimanifolds endowed with an invariant complex structure such that every invariant Hermitian metric is pluriclosed have been classified by the third author and A. Tomassini in \cite[Theorem 13]{rossi-tomassini} into two families, compare also \cite[Theorem 4.1]{enrietti-fino-vezzoni} (as a consequence, it turns out that an invariant complex structure on a $8$-dimensional nilmanifold is pluriclosed for every invariant Hermitian metric if and only if it is astheno-K\"ahler\footnote{We recall that a Hermitian metric $g$ on a complex manifold $(X,J)$ of complex dimension $n$ is said to be {\em astheno-K\"ahler} (according to the terminology by J. Jost and S.-T. Yau) if the associated $(1,1)$-form $\omega:=g(J\sspace,\ssspace)$ satisfies $\del\delbar\omega^{n-2}=0$.} for every invariant Hermitian metric, \cite[Theorem 15]{rossi-tomassini}); more precisely, the first family consists of nilmanifolds of the type $M^6\times \T^2$, where $M^6$ is a $6$-dimensional nilmanifold endowed with an invariant complex structure such that any invariant Hermitian metric on it is pluriclosed, \cite[Theorem 1.2]{fino-parton-salamon}, and $\T^2$ is the standard $2$-dimensional torus endowed with the standard complex structure.
In this section, we study the Bott-Chern cohomology of such $8$-dimensional nilmanifolds $M^6\times \T^2$. Recall that the possible $M^6$ have been classified in \cite[Theorem 3.2]{fino-parton-salamon} by A. Fino, M. Parton, and S. Salamon in terms of their associated Lie algebra.

\medskip

More precisely, in Table \ref{table:skt-8dim} we summarize the nilmanifolds of such type $M^6\times \T^2$, that is, the nilmanifolds possibly endowed with invariant pluriclosed structures and belonging to the first class in \cite[Theorem 13]{rossi-tomassini}. Then, in Table \ref{table:bc-skt-8dim}, we summarize the results of the computations of the dimensions of the Bott-Chern cohomology groups and the degree of non-K\"ahlerianity for such $8$-dimensional nilmanifolds, pointing out the existence of pluriclosed structures. (As regards notation, we follow the same conventions as in \S\ref{subsec:inv-cplx-str} and \S\ref{sec:bc-coh}, and \cite{ceballos-otal-ugarte-villacampa}.)

\FloatBarrier

\appendix

\section*{Tables}

\begin{table}
\centering
{\resizebox{\textwidth}{!}{
\begin{tabular}{>{\textbf\bgroup}l<{\textbf\egroup} >{$}l<{$} | >{$}l<{$} >{$}l<{$} >{$}l<{$} | >{$}l<{$} >{$}l<{$}}
 \toprule
 $\mathbf{\sharp}$ & \text{\bfseries algebra} & b^1 & b^2 & b^3 & \text{\bfseries complex structure} & \text{\bfseries conditions} \left( \lambda \geq 0 ,\; c \geq 0 ,\; B \in \C ,\; D \in \C \right) \\
 & & & & & & \left( \mathcal{S}(B,c) := c^4-2\,\left(\left|B\right|^2+1\right)\, c^2 + \left(\left|B\right|^2-1\right)^2 \right) \\
 \toprule
00 & \mathfrak{h}_1\;:=\;\left(0,\; 0,\; 0,\; 0,\; 0,\; 0\right) & 6 & 15 & 20 & J\;:=\;\left(0,\; 0,\; 0\right) & \\[3pt]
 \midrule[0.02em]
01 & \mathfrak{h}_2\;:=\;\left(0,\; 0,\; 0,\; 0,\; 12,\; 34\right) & 4 & 8 & 10 & J_1^D\;:=\;\left(0,\; 0,\; \omega^{1\bar1}+D\,\omega^{2\bar2}\right) \;, & \Im D\in\{1,-1\} \\[3pt]
02 & & & & & J_2^D\;:=\;\left(0,\; 0,\; \omega^{12}+\omega^{1\bar1}+\omega^{1\bar2}+D\,\omega^{2\bar2}\right) \;, & \Im D>0 \\[3pt]
 \midrule[0.02em]
03 & \mathfrak{h}_3\;:=\;\left(0,\; 0,\; 0,\; 0,\; 0,\; 12+34\right) & 5 & 9 & 10 &  J_1\;:=\;\left(0,\; 0,\; \omega^{1\bar1}+\omega^{2\bar2}\right) & \\[3pt]
04 & & & & &  J_2\;:=\;\left(0,\; 0,\; \omega^{1\bar1}-\omega^{2\bar2}\right) & \\[3pt]
 \midrule[0.02em]
05 & \mathfrak{h}_4\;:=\;\left(0,\; 0,\; 0,\; 0,\; 12,\; 14+23\right) & 4 & 8 & 10 &  J_1\;:=\;\left(0,\; 0,\; \omega^{1\bar1}+\omega^{1\bar2}+\frac{1}{4}\,\omega^{2\bar2}\right) & \\[3pt]
06 & & & & &  J_2^D\;:=\;\left(0,\; 0,\; \omega^{12}+\omega^{1\bar1}+\omega^{1\bar2}+D\,\omega^{2\bar2}\right) \;, & D\in\R\setminus\{0\}\\[3pt]
 \midrule[0.02em]
07 & \mathfrak{h}_5\;:=\;\left(0,\; 0,\; 0,\; 0,\; 13+42,\; 14+23\right) & 4 & 8 & 10 &  J_1^D\;:=\;\left(0,\; 0,\; \omega^{1\bar1}+\omega^{1\bar2}+D\,\omega^{2\bar2}\right) \;, & D\in\left[0,\,\frac{1}{4}\right) \\[3pt]
08 & & & & & J_2\;:=\;\left(0,\; 0,\; \omega^{12}\right) & \\[3pt]
09 & & & & & J_3^{\left(\lambda,\,D\right)}\;:=\;\left(0,\; 0,\; \omega^{12}+\omega^{1\bar1}+\lambda\,\omega^{1\bar2}+D\,\omega^{2\bar2}\right) \;, & \left(\lambda,\,D\right)\in \left\{\left(0,\, x+\im y\right)\in\R\times \C \st y\geq 0\;, 4y^2<1+4x \right\} \\[3pt]
 & & & & & & \cup \left\{\left(\lambda,\, \im y\right)\in\R\times \C \st 0<\lambda^2<\frac{1}{2} ,\; 0\leq y<\frac{\lambda^2}{2} \right\} \\[3pt]
 & & & & & & \cup \left\{\left(\lambda,\, \im y\right)\in\R\times \C \st \frac{1}{2}\leq\lambda^2<1 ,\; 0\leq y<\frac{1-\lambda^2}{2} \right\} \\[3pt]
 & & & & & & \cup \left\{\left(\lambda,\, \im y\right)\in\R\times \C \st \lambda^2>1 ,\; 0\leq y<\frac{\lambda^2-1}{2} \right\} \\[3pt]
 \midrule[0.02em]
10 & \mathfrak{h}_6\;:=\;\left(0,\; 0,\; 0,\; 0,\; 12,\; 13\right) & 4 & 9 & 12 & J\;:=\;\left(0,\; 0,\; \omega^{12}+\omega^{1\bar1}+\omega^{1\bar2}\right) & \\[3pt]
 \midrule[0.02em]
11 & \mathfrak{h}_7\;:=\;\left(0,\; 0,\; 0,\; 12,\; 13,\; 23\right) & 3 & 8 & 12 & J\;:=\;\left(0,\; \omega^{1\bar1},\; \omega^{12}+\omega^{1\bar2}\right) & \\[3pt]
 \midrule[0.02em]
12 & \mathfrak{h}_8\;:=\;\left(0,\; 0,\; 0,\; 0,\; 0,\; 12\right) & 5 & 11 & 14 & J\;:=\;\left(0,\; 0,\; \omega^{1\bar1}\right) & \\[3pt]
 \midrule[0.02em]
13 & \mathfrak{h}_9\;:=\;\left(0,\; 0,\; 0,\; 0,\; 12,\; 14+25\right) & 4 & 7 & 8 & J\;:=\;\left(0,\; \omega^{1\bar1},\; \omega^{1\bar2}+\omega^{2\bar1}\right) & \\[3pt]
 \midrule[0.02em]
14 & \mathfrak{h}_{10}\;:=\;\left(0,\; 0,\; 0,\; 12,\; 13,\; 14\right) & 3 & 6 & 8 & J\;:=\;\left(0,\; \omega^{1\bar1},\; \omega^{12}+\omega^{2\bar1}\right) & \\[3pt]
 \midrule[0.02em]
15 & \mathfrak{h}_{11}\;:=\;\left(0,\; 0,\; 0,\; 12,\; 13,\; 14+23\right) & 3 & 6 & 8 & J^B\;:=\;\left(0,\; \omega^{1\bar1},\; \omega^{12}+B\, \omega^{1\bar2}+\left|B-1\right|\,\omega^{2\bar1}\right) \;, & B\in\R\setminus\{0,\,1\} \\[3pt]
 \midrule[0.02em]
16 & \mathfrak{h}_{12}\;:=\;\left(0,\; 0,\; 0,\; 12,\; 13,\; 24\right) & 3 & 6 & 8 & J^B\;:=\;\left(0,\; \omega^{1\bar1},\; \omega^{12}+B\,\omega^{1\bar2}+\left|B-1\right|\,\omega^{2\bar1}\right) \;, & \Im B\neq0 \\[3pt]
 \midrule[0.02em]
17 & \mathfrak{h}_{13}\;:=\;\left(0,\; 0,\; 0,\; 12,\; 13+14,\; 24\right) & 3 & 5 & 6 & J^{(B,\,c)}\;:=\;\left(0,\; \omega^{1\bar1},\; \omega^{12}+B\, \omega^{1\bar2}+c\, \omega^{2\bar1}\right) \;, & c\neq\left|B-1\right|,\, \left(c,\, \left|B\right|\right)\neq\left(0,\,1\right),\,  \mathcal{S}(B,c) < 0\\[3pt]
 \midrule[0.02em]
18 & \mathfrak{h}_{14}\;:=\;\left(0,\; 0,\; 0,\; 12,\; 14,\; 13+42\right) & 3 & 5 & 6 & J^{(B,\,c)}\;:=\;\left(0,\; \omega^{1\bar1},\; \omega^{12}+B\, \omega^{1\bar2}+c\, \omega^{2\bar1}\right) \;, & c\neq\left|B-1\right|,\, \left(c,\, \left|B\right|\right)\neq\left(0,\,1\right),\, \mathcal{S}(B,c) = 0\\[3pt]
 \midrule[0.02em]
19 & \mathfrak{h}_{15}\;:=\;\left(0,\; 0,\; 0,\; 12,\; 13+42,\; 14+23\right) & 3 & 5 & 6 & J_1\;:=\;\left(0,\; \omega^{1\bar1},\; \omega^{2\bar1}\right) & \\[3pt]
20 & & & & & J_2^{c}\;:=\;\left(0,\; \omega^{1\bar1},\; \omega^{1\bar2}+c\, \omega^{2\bar1}\right) \;, & c \neq 1 \\[3pt]
21 & & & & & J_3^{(B,\,c)}\;:=\;\left(0,\; \omega^{1\bar1},\; \omega^{12}+B\, \omega^{1\bar2}+c\, \omega^{2\bar1} \right) \;, & c\neq\left|B-1\right|,\, \left(c,\, \left|B\right|\right)\neq\left(0,\,1\right),\, \mathcal{S}(B,c) > 0\\[3pt]
 \midrule[0.02em]
22 & \mathfrak{h}_{16}\;:=\;\left(0,\; 0,\; 0,\; 12,\; 14,\; 24\right) & 3 & 5 & 6 & J^B\;:=\;\left(0,\; \omega^{1\bar1},\; \omega^{12}+B\, \omega^{1\bar2}\right) \;, & \left|B\right|=1,\, B\neq 1 \\[3pt]
 \midrule[0.02em]
23 & \mathfrak{h}_{19}^-\;:=\;\left(0,\; 0,\; 0,\; 12,\; 23,\; 14-35\right) & 3 & 5 & 6 & J_1\;:=\;\left(0,\; \omega^{13}+\omega^{1\bar3},\; \im\, \left( \omega^{1\bar2}-\omega^{2\bar1}\right)\right) & \\[3pt]
24 & & & & & J_2\;:=\;\left(0,\; \omega^{13}+\omega^{1\bar3},\; -\im\, \left( \omega^{1\bar2}-\omega^{2\bar1}\right)\right) & \\[3pt]
 \midrule[0.02em]
25 & \mathfrak{h}_{26}^+\;:=\;\left(0,\; 0,\; 12,\; 13,\; 23,\; 14+25\right) & 2 & 4 & 6 & J_1\;:=\;\left(0,\; \omega^{13}+\omega^{1\bar3},\; \im\, \omega^{1\bar1}+\im\, \left( \omega^{1\bar2}-\omega^{2\bar1}\right)\right) & \\[3pt]
26 & & & & & J_2\;:=\;\left(0,\; \omega^{13}+\omega^{1\bar3},\; \im\, \omega^{1\bar1}-\im\, \left( \omega^{1\bar2}-\omega^{2\bar1}\right)\right) & \\[3pt]
\bottomrule
\end{tabular}
}}
\caption{M. Ceballos, A. Otal, L. Ugarte, and R. Villacampa's classification of linear integrable complex structures on $6$-dimensional nilpotent Lie algebras up to equivalence, \cite{ceballos-otal-ugarte-villacampa}.}
\label{table:classification}
\end{table}

\begin{landscape}
\begin{table}\centering
{\resizebox{1.3\textwidth}{!}{
\begin{tabular}{>{\textbf\bgroup}l<{\textbf\egroup} >{$}c<{$} >{$}c<{$} | >{$}c<{$} || c || >{$}c<{$} >{$}c<{$} | >{$}c<{$} >{$}c<{$} >{$}c<{$} | >{$}c<{$} >{$}c<{$} >{$}c<{$} >{$}c<{$} | >{$}c<{$} >{$}c<{$} >{$}c<{$} | >{$}c<{$} >{$}c<{$} || >{$}c<{$} >{$}c<{$} >{$}c<{$} || >{$}c<{$} >{$}c<{$} >{$}c<{$}}
\toprule
 & & & \lambda \geq 0 ,\; c \geq 0 ,\; B \in \C ,\; D \in \C & \textsc{skt} & h^{1,0}_{BC} & h^{0,1}_{BC} & h^{2,0}_{BC} & h^{1,1}_{BC} & h^{0,2}_{BC} & h^{3,0}_{BC} & h^{2,1}_{BC} & h^{1,2}_{BC} & h^{0,3}_{BC} & h^{3,1}_{BC} & h^{2,2}_{BC} & h^{1,3}_{BC} & h^{3,2}_{BC} & h^{2,3}_{BC} & b_1 & b_2 & b_3 & \Delta^1 & \Delta^2 & \Delta^3 \\[1pt]
 & & & \left( \mathcal{S}(B,c) := c^4-2\,\left(\left|B\right|^2+1\right)\, c^2 + \left(\left|B\right|^2-1\right)^2 \right) & & & & & & & & & & & & & & & & & & & & & \\
\midrule[0.02em]\addlinespace[1.5pt]\midrule[0.02em]
00  & \mathfrak{h}_{ 1}   & J & & \checkmark & 3 & 3 & 3 & 9 & 3 & 1 & 9 & 9 & 1 & 3 & 9 & 3 & 3 & 3 & 6 & 15 & 20 & 0 & 0 & 0 \\
\hline
01a  & \mathfrak{h}_{ 2}   & J_1^D & D \not \in \{ \im, - \im \}, \Im D \in \{ 1, -1 \}  & $\times$ & 2 & 2 & 1 & 4 & 1 & 1 & 6 & 6 & 1 & 3 & 6 & 3 & 3 & 3 & 4 & 8 & 10 & 2 & 2 & 8 \\[1pt]
01b  & \mathfrak{h}_{ 2}   & J_1^D & D \in \{ \im, -\im \} & \checkmark & 2 & 2 & 1 & 4 & 1 & 1 & 6 & 6 & 1 & 3 & 7 & 3 & 3 & 3 & 4 & 8 & 10 & 2 & 3 & 8 \\[1pt]
02a & \mathfrak{h}_{ 2}   & J_2^D & \Re D \neq 1 ,\; \left|D\right|^2 + 2 \, \Re D \neq 0 ,\; \Im D > 0 & $\times$ & 2 & 2 & 1 & 4 & 1 & 1 & 6 & 6 & 1 & 2 & 6 & 2 & 3 & 3 & 4 & 8 & 10 & 2 & 0 & 8 \\[1pt]
02b & \mathfrak{h}_{ 2}   & J_2^D & \Re D = 1 ,\; \Im D > 0 & \checkmark & 2 & 2 & 1 & 4 & 1 & 1 & 6 & 6 & 1 & 2 & 7 & 2 & 3 & 3 & 4 & 8 & 10 & 2 & 1 & 8 \\[1pt]
02c & \mathfrak{h}_{ 2}   & J_2^D & \Re D \neq 1 ,\; \left|D\right|^2 + 2\, \Re D = 0 ,\; \Im D > 0 & $\times$ & 2 & 2 & 1 & 5 & 1 & 1 & 6 & 6 & 1 & 2 & 6 & 2 & 3 & 3 & 4 & 8 & 10 & 2 & 1 & 8 \\
\hline
03  & \mathfrak{h}_{ 3}   & J_1 & & $\times$ & 2 & 2 & 1 & 4 & 1 & 1 & 6 & 6 & 1 & 3 & 7 & 3 & 3 & 3 & 5 & 9 & 10 & 0 & 1 & 8 \\[1pt]
04  & \mathfrak{h}_{ 3}   & J_2 & & $\times$ & 2 & 2 & 1 & 4 & 1 & 1 & 6 & 6 & 1 & 3 & 7 & 3 & 3 & 3 & 5 & 9 & 10 & 0 & 1 & 8 \\
\hline
05  & \mathfrak{h}_{ 4}   & J_1 & & $\times$ & 2 & 2 & 1 & 4 & 1 & 1 & 6 & 6 & 1 & 3 & 6 & 3 & 3 & 3 & 4 & 8 & 10 & 2 & 2 & 8 \\[1pt]
06a & \mathfrak{h}_{ 4}   & J_2^D & D \in \R\setminus \left\{-2,\, 0,\, 1\right\} & $\times$ & 2 & 2 & 1 & 4 & 1 & 1 & 6 & 6 & 1 & 2 & 6 & 2 & 3 & 3 & 4 & 8 & 10 & 2 & 0 & 8 \\[1pt]
06b & \mathfrak{h}_{ 4}   & J_2^D & D=1 & \checkmark & 2 & 2 & 1 & 4 & 1 & 1 & 6 & 6 & 1 & 2 & 7 & 2 & 3 & 3 & 4 & 8 & 10 & 2 & 1 & 8 \\[1pt]
06c & \mathfrak{h}_{ 4}   & J_2^D & D=-2 & $\times$ & 2 & 2 & 1 & 5 & 1 & 1 & 6 & 6 & 1 & 2 & 6 & 2 & 3 & 3 & 4 & 8 & 10 & 2 & 1 & 8 \\
\hline
07a  & \mathfrak{h}_{ 5}   & J_1^D & D \in \left(0,\, \frac{1}{4}\right) & $\times$ & 2 & 2 & 1 & 4 & 1 & 1 & 6 & 6 & 1 & 3 & 6 & 3 & 3 & 3 & 4 & 8 & 10 & 2 & 2 & 8 \\[1pt]
07b  & \mathfrak{h}_{ 5}   & J_1^D & D=0 & $\times$ & 2 & 2 & 2 & 6 & 2 & 1 & 6 & 6 & 1 & 3 & 6 & 3 & 3 & 3 & 4 & 8 & 10 & 2 & 6 & 8 \\[1pt]
08  & \mathfrak{h}_{ 5}   & J_2 & & $\times$ & 2 & 2 & 3 & 4 & 3 & 1 & 6 & 6 & 1 & 2 & 8 & 2 & 3 & 3 & 4 & 8 & 10 & 2 & 6 & 8 \\[1pt]
09a  & \mathfrak{h}_{ 5}   & J_3^{\left(\lambda,\, D\right)} & \left\{ \lambda=0 ,\; \Re D \neq\frac{1}{2} ,\; 0 < 4\, \left(\Im D\right)^2 < 1 + 4 \,\Re D ,\; \Im D > 0 \right\} & $\times$ & 2 & 2 & 1 & 4 & 1 & 1 & 6 & 6 & 1 & 2 & 6 & 2 & 3 & 3 & 4 & 8 & 10 & 2 & 0 & 8 \\[1pt]
  & & & \cup \; \left\{ 0<\lambda^2<\frac{1}{2} ,\; \Re D = 0 ,\; 0 < \Im D < \frac{\lambda^2}{2} \right\} & & & & & & & & & & & & & & & & & & & & & \\[1pt]
  & & & \cup \; \left\{ \frac{1}{2}\leq\lambda^2<1 ,\; \Re D = 0 ,\; 0 < \Im D < \frac{1-\lambda^2}{2} \right\} & & & & & & & & & & & & & & & & & & & & & \\[1pt]
  & & & \cup \; \left\{ \lambda^2>1 ,\; \Re D = 0 ,\; 0 < \Im D < \frac{\lambda^2-1}{2} ,\; \left(\Im D\right)^2 \neq \lambda^2 - 1 \right\} & & & & & & & & & & & & & & & & & & & & & \\[1pt]
09b'  & \mathfrak{h}_{ 5}   & J_3^{\left(\lambda,\, D\right)} & \left\{ \lambda = 0 ,\; \Re D = \frac{1}{2} ,\; 0 < 4\, \left(\Im D\right)^2 < 3 ,\; \Im D > 0 \right\} & \checkmark & 2 & 2 & 1   & 4 & 1 & 1 & 6 & 6 & 1 & 2 & 7 & 2 & 3 & 3 & 4 & 8 & 10 & 2 & 1 & 8 \\[1pt]
09b''  & & & \cup \; \left\{ \lambda=0 ,\; \Re D \not\in \left\{0, \frac{1}{2}\right\} ,\; 1 + 4\, \Re D > 0 ,\; \Im D = 0 \right\} & $\times$ & & & & & & & & & & & & & & & & & & & & \\[1pt]
09c  & \mathfrak{h}_{ 5}   & J_3^{\left(\lambda,\, D\right)} & \lambda=0 ,\; D = \frac{1}{2} & \checkmark & 2 & 2 & 1 & 4 & 1 & 1 & 6 & 6 & 1 & 2 & 8 & 2 & 3 & 3 & 4 & 8 & 10 & 2 & 2 & 8 \\[1pt]
09d  & \mathfrak{h}_{ 5}   & J_3^{\left(\lambda,\, D\right)} & \lambda^2>1 ,\; \Re D = 0 ,\; 0 < \Im D < \frac{\lambda^2-1}{2} ,\; \left(\Im D\right)^2 = \lambda^2-1 & $\times$ & 2 & 2 & 1 & 5 & 1 & 1 & 6 & 6 & 1 & 2 & 6 & 2 & 3 & 3 & 4 & 8 & 10 & 2 & 1 & 8 \\[1pt]
09e  & \mathfrak{h}_{ 5}   & J_3^{\left(\lambda,\, D\right)} & \left\{ 0<\lambda^2<\frac{1}{2} ,\; D=0 \right\} & $\times$ & 2 & 2 & 2 & 4 & 2 & 1 & 6 & 6 & 1 & 2 & 6 & 2 & 3 & 3 & 4 & 8 & 10 & 2 & 2 & 8 \\[1pt]
  & & & \cup \; \left\{ \frac{1}{2}\leq\lambda^2<1 ,\; D=0 \right\} & & & & & & & & & & & & & & & & & & & & & \\[1pt]
  & & & \cup \; \left\{ \lambda^2>1 ,\; D=0 \right\} & & & & & & & & & & & & & & & & & & & & & \\[1pt]
09f  & \mathfrak{h}_{ 5}   & J_3^{\left(\lambda,\, D\right)} & \lambda=0 ,\; D=0 & $\times$ & 2 & 2 & 2 & 4 & 2 & 1 & 6 & 6 & 1 & 2 & 7 & 2 & 3 & 3 & 4 & 8 & 10 & 2 & 3 & 8 \\
\hline
10  & \mathfrak{h}_{ 6}   & J & & $\times$ & 2 & 2 & 2 & 5 & 2 & 1 & 6 & 6 & 1 & 2 & 6 & 2 & 3 & 3 & 4 & 9 & 12 & 2 & 1 & 4 \\
\hline
11  & \mathfrak{h}_{ 7}   & J & & $\times$ & 1 & 1 & 2 & 5 & 2 & 1 & 6 & 6 & 1 & 2 & 5 & 2 & 3 & 3 & 3 & 8 & 12 & 2 & 2 & 4 \\
\hline
12  & \mathfrak{h}_{ 8}   & J & & \checkmark & 2 & 2 & 2 & 6 & 2 & 1 & 7 & 7 & 1 & 3 & 8 & 3 & 3 & 3 & 5 & 11 & 14 & 0 & 2 & 4 \\
\hline
13  & \mathfrak{h}_{ 9}   & J & & $\times$ & 1 & 1 & 1 & 4 & 1 & 1 & 5 & 5 & 1 & 3 & 6 & 3 & 3 & 3 & 4 & 7 & 8 & 0 & 4 & 8 \\
\hline
14  & \mathfrak{h}_{10}   & J & & $\times$ & 1 & 1 & 1 & 4 & 1 & 1 & 5 & 5 & 1 & 2 & 5 & 2 & 3 & 3 & 3 & 6 & 8 & 2 & 3 & 8 \\
\hline
15a & \mathfrak{h}_{11}   & J^B & B \in \R\setminus\left\{0,\, \frac{1}{2},\, 1\right\} & $\times$ & 1 & 1 & 1 & 4 & 1 & 1 & 5 & 5 & 1 & 2 & 5 & 2 & 3 & 3 & 3 & 6 & 8 & 2 & 3 & 8 \\[1pt]
15b & \mathfrak{h}_{11}   & J^B & B=\frac{1}{2}    & $\times$ & 1 & 1 & 1 & 4 & 1 & 1 & 5 & 5 & 1 & 2 & 6 & 2 & 3 & 3 & 3 & 6 & 8 & 2 & 4 & 8 \\
\hline
16a  & \mathfrak{h}_{12}   & J^B & \Re B\neq\frac{1}{2} ,\; \Im B \neq 0 & $\times$ & 1 & 1 & 1 & 4 & 1 & 1 & 5 & 5 & 1 & 2 & 5 & 2 & 3 & 3 & 3 & 6 & 8 & 2 & 3 & 8 \\[1pt]
16b  & \mathfrak{h}_{12}   & J^B & \Re B=\frac{1}{2} ,\; \Im B \neq 0 & $\times$ & 1 & 1 & 1 & 4 & 1 & 1 & 5 & 5 & 1 & 2 & 6 & 2 & 3 & 3 & 3 & 6 & 8 & 2 & 4 & 8 \\
\hline
17a & \mathfrak{h}_{13}   & J^{\left(B,\,c\right)} & c \not\in \left\{ \left|B-1\right|,\, \left|B\right| \right\} ,\; \left(c,\, \left|B\right|\right) \neq \left(0,\, 1\right) ,\; \mathcal{S}(B,c) < 0 ,\; B \neq 1 & $\times$ & 1 & 1 & 1 & 4 & 1 & 1 & 5 & 5 & 1 & 2 & 5 & 2 & 3 & 3 & 3 & 5 & 6 & 2 & 5 & 12 \\[1pt]
17b & \mathfrak{h}_{13}   & J^{\left(B,\,c\right)} & c = \left|B\right| > \frac{1}{2} ,\; \Re B \neq \frac{1}{2} ,\; B \neq 1 & $\times$ & 1 & 1 & 1 & 4 & 1 & 1 & 5 & 5 & 1 & 2 & 6 & 2 & 3 & 3 & 3 & 5 & 6 & 2 & 6 & 12 \\[1pt]
17c & \mathfrak{h}_{13}   & J^{\left(B,\,c\right)} & c \not\in \left\{ 0 ,\, 1 \right\} ,\; c < 2 ,\; B = 1 & $\times$ & 1 & 1 & 1 & 5 & 1 & 1 & 5 & 5 & 1 & 2 & 5 & 2 & 3 & 3 & 3 & 5 & 6 & 2 & 6 & 12 \\[1pt]
17d & \mathfrak{h}_{13}   & J^{\left(B,\,c\right)} & c = 1 ,\; B = 1 & $\times$ & 1 & 1 & 1 & 5 & 1 & 1 & 5 & 5 & 1 & 2 & 6 & 2 & 3 & 3 & 3 & 5 & 6 & 2 & 7 & 12 \\
\hline
18a & \mathfrak{h}_{14}   & J^{\left(B,\,c\right)} & B\neq 1 \;, c \not\in \left\{0,\, \left|B\right|,\, \left|B-1\right|\right\} ,\; \mathcal{S}(B,c) = 0 & $\times$ & 1 & 1 & 1 & 4 & 1 & 1 & 5 & 5 & 1 & 2 & 5 & 2 & 3 & 3 & 3 & 5 & 6 & 2 & 5 & 12 \\[1pt]
18b & \mathfrak{h}_{14}   & J^{\left(B,\,c\right)} & c = \left|B\right| = \frac{1}{2} ,\; \Re B \neq \frac{1}{2} & $\times$ & 1 & 1 & 1 & 4 & 1 & 1 & 5 & 5 & 1 & 2 & 6 & 2 & 3 & 3 & 3 & 5 & 6 & 2 & 6 & 12 \\[1pt]
18c & \mathfrak{h}_{14}   & J^{\left(B,\,c\right)} & B=1,\, c=2 & $\times$ & 1 & 1 & 1 & 5 & 1 & 1 & 5 & 5 & 1 & 2 & 5 & 2 & 3 & 3 & 3 & 5 & 6 & 2 & 6 & 12 \\
\hline
19  & \mathfrak{h}_{15}   & J_1 & & $\times$ & 1 & 1 & 1 & 5 & 1 & 1 & 5 & 5 & 1 & 3 & 5 & 3 & 3 & 3 & 3 & 5 & 6 & 2 & 8 & 12 \\[1pt]
20a & \mathfrak{h}_{15}   & J_2^c & c \not\in \left\{ 0 ,\, 1 \right\} & $\times$ & 1 & 1 & 1 & 4 & 1 & 1 & 5 & 5 & 1 & 3 & 5 & 3 & 3 & 3 & 3 & 5 & 6 & 2 & 7 & 12 \\[1pt]
20b & \mathfrak{h}_{15}   & J_2^c & c = 0 & $\times$ & 1 & 1 & 2 & 4 & 2 & 1 & 5 & 5 & 1 & 3 & 5 & 3 & 3 & 3 & 3 & 5 & 6 & 2 & 9 & 12 \\[1pt]
21a & \mathfrak{h}_{15}   & J_3^{\left(B,\,c\right)} & c \not\in \left\{0,\, \left|B-1\right|,\, \left|B\right|\right\} ,\; B\neq 1 ,\; \mathcal{S}(B,c) > 0 & $\times$ & 1 & 1 & 1 & 4 & 1 & 1 & 5 & 5 & 1 & 2 & 5 & 2 & 3 & 3 & 3 & 5 & 6 & 2 & 5 & 12 \\[1pt]
21b & \mathfrak{h}_{15}   & J_3^{\left(B,\,c\right)} & 0 < c = \left|B\right| < \frac{1}{2} & $\times$ & 1 & 1 & 1 & 4 & 1 & 1 & 5 & 5 & 1 & 2 & 6 & 2 & 3 & 3 & 3 & 5 & 6 & 2 & 6 & 12 \\[1pt]
21c & \mathfrak{h}_{15}   & J_3^{\left(B,\,c\right)} & c > 2 ,\; B=1 & $\times$ & 1 & 1 & 1 & 5 & 1 & 1 & 5 & 5 & 1 & 2 & 5 & 2 & 3 & 3 & 3 & 5 & 6 & 2 & 6 & 12 \\[1pt]
21d & \mathfrak{h}_{15}   & J_3^{\left(B,\,c\right)} & c=0 ,\; B \neq 0 ,\; \left|B\right| \neq 1 & $\times$ & 1 & 1 & 2 & 4 & 2 & 1 & 5 & 5 & 1 & 2 & 5 & 2 & 3 & 3 & 3 & 5 & 6 & 2 & 7 & 12 \\[1pt]
21e & \mathfrak{h}_{15}   & J_3^{\left(B,\,c\right)} & c=0 ,\; B = 0 & $\times$ & 1 & 1 & 2 & 4 & 2 & 1 & 5 & 5 & 1 & 2 & 7 & 2 & 3 & 3 & 3 & 5 & 6 & 2 & 9 & 12 \\
\hline
22  & \mathfrak{h}_{16}   & J^B & \left|B\right| = 1 ,\; B \neq 1& $\times$ & 1 & 1 & 2 & 4 & 2 & 1 & 5 & 5 & 1 & 2 & 5 & 2 & 3 & 3 & 3 & 5 & 6 & 2 & 7 & 12 \\
\hline
23  & \mathfrak{h}_{19}^- & J_1 & & $\times$ & 1 & 1 & 1 & 2 & 1 & 1 & 3 & 3 & 1 & 2 & 4 & 2 & 2 & 2 & 3 & 5 & 6 & 0 & 2 & 4 \\[1pt]
24  & \mathfrak{h}_{19}^- & J_2 & & $\times$ & 1 & 1 & 1 & 2 & 1 & 1 & 3 & 3 & 1 & 2 & 4 & 2 & 2 & 2 & 3 & 5 & 6 & 0 & 2 & 4 \\
\hline
25  & \mathfrak{h}_{26}^+ & J_1 & & $\times$ & 1 & 1 & 1 & 2 & 1 & 1 & 3 & 3 & 1 & 2 & 3 & 2 & 2 & 2 & 2 & 4 & 6 & 2 & 3 & 4 \\[1pt]
26  & \mathfrak{h}_{26}^+ & J_2 & & $\times$ & 1 & 1 & 1 & 2 & 1 & 1 & 3 & 3 & 1 & 2 & 3 & 2 & 2 & 2 & 2 & 4 & 6 & 2 & 3 & 4 \\
\bottomrule
\end{tabular}
}}
\caption{Dimensions of the Bott-Chern cohomology groups of the $6$-dimensional nilpotent Lie algebras endowed with a linear integrable complex structure.}
\label{table:bott-chern-6}
\end{table}
\end{landscape}

\begin{center}
\begin{table}\centering
\begin{tabular}{>{\textbf\bgroup}l<{\textbf\egroup} >{$}c<{$} | >{$}c<{$} >{$}r<{$}}
 \toprule
 $\mathbf{\sharp}$ & \text{\bfseries algebra} & \text{\bfseries complex structure} & \del\delbar\left(\sum_{j=1}^{3}\omega^j\wedge\bar\omega^j\right) \\
 \toprule
01 & \mathfrak{h}_2 & J_1^D & (D+\bar{D})\ \omega^{12\bar1\bar2} \\[3pt]
02 & & J_2^D & (-2+D+\bar{D})\ \omega^{12\bar1\bar2} \\[3pt]
 \midrule[0.02em]
05 & \mathfrak{h}_4 &  J_1 & -\frac12\ \omega^{12\bar1\bar2} \\[3pt]
06 & &  J_2^D & 2(D-1)\ \omega^{12\bar1\bar2} \\[3pt]
 \midrule[0.02em]
07 & \mathfrak{h}_5 &  J_1^D & (2D-1)\ \omega^{12\bar1\bar2} \\[3pt]
08 & & J_2 & -\omega^{12\bar1\bar2}\\[3pt]
09 & & J_3^{\left(\lambda,\, D\right)} & (-1-\lambda^2+D+\bar{D})\ \omega^{12\bar1\bar2} \\[3pt]
 \midrule[0.02em]
12 & \mathfrak{h}_8  & J & 0 \\[3pt]
\bottomrule
\end{tabular}
\caption{Non-toric $6$-dimensional nilmanifolds endowed with invariant complex structures and possibly with invariant pluriclosed metrics.}
\label{table:skt-6}
\end{table}
\end{center}

\begin{landscape}
\begin{table}\centering
{\resizebox{1.3\textwidth}{!}{
\begin{tabular}{>{\textbf\bgroup}l<{\textbf\egroup} >{$}l<{$} | >{$}l<{$} >{$}l<{$} >{$}l<{$} >{$}l<{$} | >{$}l<{$} >{$}l<{$}}
 \toprule
 $\mathbf{\sharp}$ & \text{\bfseries algebra} & b^1 & b^2 & b^3 & b^4 & \text{\bfseries complex structure} & \text{\bfseries conditions} \left( \lambda \geq 0 ,\; D \in \C \right) \\
 \toprule
00$_\text{8D}$ & \mathfrak{h}_1\times\T^2=\left(0^8\right) & 8 & 28 & 56 & 70 & J:=\left(0,\; 0,\; 0,\; 0\right) \;, & \\[3pt]
 \midrule[0.02em]
01$_\text{8D}$ & \mathfrak{h}_2\times\T^2=\left(0^4,\; 12,\; 34,\;0^2\right) & 6 & 17 & 30 & 36 & J_1^D:=\left(0,\; 0,\; \omega^{1\bar1}+D\,\omega^{2\bar2},\; 0\right) \;, & \Im D \in \{ 1, -1 \} \\[3pt]
02$_\text{8D}$ & & & & & & J_2^D:=\left(0,\; 0,\; \omega^{12}+\omega^{1\bar1}+\omega^{1\bar2}+D\,\omega^{2\bar2},\; 0\right) \;, & \Im D>0 \\[3pt]
 \midrule[0.02em]
05$_\text{8D}$ & \mathfrak{h}_4\times\T^2=\left(0^4,\; 12,\; 14+23,\;0^2\right) & 6 & 17 & 30 & 36 &  J_1:=\left(0,\; 0,\; \omega^{1\bar1}+\omega^{1\bar2}+\frac{1}{4}\,\omega^{2\bar2},\; 0\right) & \\[3pt]
06$_\text{8D}$ & & & & & &  J_2^D:=\left(0,\; 0,\; \omega^{12}+\omega^{1\bar1}+\omega^{1\bar2}+D\,\omega^{2\bar2},\; 0\right) \;, & D\in\R\setminus\{0\}\\[3pt]
 \midrule[0.02em]
07$_\text{8D}$ & \mathfrak{h}_5\times\T^2=\left(0^4,\; 13+42,\; 14+23,\;0^2\right) & 6 & 17 & 30 & 36 & J_1^D:=\left(0,\; 0,\; \omega^{1\bar1}+\omega^{1\bar2}+D\,\omega^{2\bar2},\; 0\right) \;, & D\in\left[0,\,\frac{1}{4}\right) \\[3pt]
08$_\text{8D}$ & & & & & & J_2:=\left(0,\; 0,\; \omega^{12},\; 0\right) & \\[3pt]
09$_\text{8D}$ & & & & & & J_3^{\left(\lambda,\,D\right)}:=\left(0,\; 0,\; \omega^{12}+\omega^{1\bar1}+\lambda\,\omega^{1\bar2}+D\,\omega^{2\bar2},\; 0\right) \;, & \left(\lambda,\,D\right)\in \left\{\left(0,\, x+\im y\right)\in\R\times \C \st y\geq 0\;, 4y^2<1+4x \right\} \\[3pt]
 & & & & & & & \cup \left\{\left(\lambda,\, \im y\right)\in\R\times \C \st 0<\lambda^2<\frac{1}{2} ,\; 0\leq y<\frac{\lambda^2}{2} \right\} \\[3pt]
 & & & & & & & \cup \left\{\left(\lambda,\, \im y\right)\in\R\times \C \st \frac{1}{2}\leq\lambda^2<1 ,\; 0\leq y<\frac{1-\lambda^2}{2} \right\} \\[3pt]
 & & & & & & & \cup \left\{\left(\lambda,\, \im y\right)\in\R\times \C \st \lambda^2>1 ,\; 0\leq y<\frac{\lambda^2-1}{2} \right\} \\[3pt]
 \midrule[0.02em]
12$_\text{8D}$ & \mathfrak{h}_8\times\T^2=\left(0^5,\; 12,\;0^2\right) & 7 & 22 & 41 & 50 & J\;:=\;\left(0,\; 0,\; \omega^{1\bar1},\; 0\right) & \\[3pt]
   & & & & & & & \\[3pt]
 \midrule[0.02em]
\bottomrule
\end{tabular}
}}
\caption{$8$-dimensional nilmanifolds of the type $M^6\times\T^2$ endowed with invariant complex structures and possibly with invariant pluriclosed metrics, where $M^6$ is a $6$-dimensional nilmanifold endowed with an invariant complex structure such that any invariant Hermitian metric on it is pluriclosed, and $\T^2$ is the standard $2$-dimensional torus endowed with the standard complex structure.}
\label{table:skt-8dim}
\end{table}
\end{landscape}

\begin{landscape}
\begin{table}\centering
{\resizebox{1.3\textwidth}{!}{
\begin{tabular}{>{\textbf\bgroup}l<{\textbf\egroup} >{$}c<{$} >{$}c<{$} | >{$}c<{$} || c || >{$}c<{$} | >{$}c<{$} >{$}c<{$} | >{$}c<{$} >{$}c<{$} | >{$}c<{$} >{$}c<{$} >{$}c<{$} | >{$}c<{$} >{$}c<{$} | >{$}c<{$} >{$}c<{$} | >{$}c<{$}  || >{$}c<{$} >{$}c<{$} >{$}c<{$} >{$}c<{$}}
\toprule
 & & & \lambda \geq 0 ,\; D \in \C & \textsc{skt} & h^{1,0}_{BC}  & h^{2,0}_{BC} & h^{1,1}_{BC} & h^{3,0}_{BC} & h^{2,1}_{BC} & h^{4,0}_{BC} & h^{3,1}_{BC} & h^{2,2}_{BC} & h^{4,1}_{BC} & h^{3,2}_{BC} & h^{4,2}_{BC} & h^{3,3}_{BC} & h^{4,3}_{BC}  & \Delta^1 & \Delta^2 & \Delta^3 & \Delta^4 \\
\midrule[0.02em]\addlinespace[1.5pt]\midrule[0.02em]
00$_{\text{8D}}$  & \mathfrak{h}_{ 1}   & J & & \checkmark & 4 & 6 & 16 & 4 & 24 & 1 & 16 & 36 & 4 & 24 & 6 & 16 & 4 & 0 & 0 & 0 & 0 \\
\hline
01a$_{\text{8D}}$  & \mathfrak{h}_{ 2}   & J_1^D & D \not \in \{ \im, - \im \}, \Im D \in \{ 1, -1 \} & $\times$ & 3 & 3 & 9 & 2 & 13 & 1 & 11 & 22 & 4 & 18 & 6 & 13 & 4 & 2 & 6 & 14 & 20\\[1pt]
01b$_{\text{8D}}$  & \mathfrak{h}_{ 2}   & J_1^D & D \in \{ \im, - \im \} & \checkmark & 3 & 3 & 9 & 2 & 13 & 1 & 11 & 23 & 4 & 19 & 6 & 14 & 4 & 2 & 7 & 16 & 22 \\[1pt]
02a$_{\text{8D}}$ & \mathfrak{h}_{ 2}   & J_2^D & \Re D \neq 1 ,\; \left|D\right|^2 + 2 \, \Re D \neq 0 ,\; \Im D > 0 & $\times$ & 3 & 3 & 9 & 2 & 13 & 1 & 10 & 22 & 3 & 17 & 5 & 13 & 4 & 2 & 4 & 10 & 16 \\[1pt]
02b$_{\text{8D}}$ & \mathfrak{h}_{ 2}   & J_2^D & \Re D = 1 ,\; \Im D > 0 & \checkmark & 3 & 3 & 9 & 2 & 13 & 1 & 10 & 23 & 3 & 18 & 5 & 14 & 4 & 2 & 5 & 12 & 18 \\[1pt]
02c$_{\text{8D}}$ & \mathfrak{h}_{ 2}   & J_2^D & \Re D \neq 1 ,\; \left|D\right|^2 + 2 \, \Re D =0 ,\; \Im D > 0 & $\times$ & 3 & 3 & 10 & 2 & 14 & 1 & 10 & 23 & 3 & 17 & 5 & 13 & 4 & 2 & 5 & 12 & 18 \\
\hline 
05$_{\text{8D}}$  & \mathfrak{h}_{ 4}   & J_1 & & $\times$ & 3 & 3 & 9 & 2 & 13 & 1 & 11 & 22 & 4 & 18 & 6 & 13 & 4 & 2 & 6 & 14 & 20 \\[1pt]
06a$_{\text{8D}}$ & \mathfrak{h}_{ 4}   & J_2^D & D\in\R\setminus\left\{-2,\, 0,\, 1\right\} & $\times$ & 3 & 3 & 9 & 2 & 13 & 1 & 10 & 22 & 3 & 17 & 5 & 13 & 4 & 2 & 4 & 10 & 16 \\[1pt]
06b$_{\text{8D}}$ & \mathfrak{h}_{ 4}   & J_2^D & D=1                           & \checkmark & 3 & 3 & 9 & 2 & 13 & 1 & 10 & 23 & 3 & 18 & 5 & 14 & 4 & 2 & 5 & 12 & 18 \\[1pt]
06c$_{\text{8D}}$ & \mathfrak{h}_{ 4}   & J_2^D & D=-2                          & $\times$ & 3 & 3 & 10 & 2 & 14 & 1 & 10 & 23 & 3 & 17 & 5 & 13 & 4 & 2 & 5 & 12 & 18 \\
\hline
07a$_{\text{8D}}$  & \mathfrak{h}_{ 5}   & J_1^D & D \in \left(0,\, \frac{1}{4}\right) & $\times$ & 3 & 3 & 9 & 2 & 13 & 1 & 11 & 22 & 4 & 18 & 6 & 13 & 4 & 2 & 6 & 14 & 20 \\[1pt]
07b$_{\text{8D}}$  & \mathfrak{h}_{ 5}   & J_1^D & D=0 & $\times$ & 3 & 4 & 11 & 3 & 16 & 1 & 12 & 24 & 4 & 18 & 6 & 13 & 4 & 2 & 10 & 22 & 28 \\[1pt]
08$_{\text{8D}}$  & \mathfrak{h}_{ 5}   & J_2 & & $\times$ & 3 & 5 & 9 & 4 & 15 & 1 & 12 & 24 & 3 & 19 & 5 & 15 & 4 & 2 & 10 & 22 & 28 \\[1pt]
09a$_{\text{8D}}$  & \mathfrak{h}_{ 5}   & J_3^{\left(\lambda,\, D\right)} & \left\{ \lambda=0 ,\; \Re D \neq\frac{1}{2} ,\; 0 < 4\, \left(\Im D\right)^2 < 1 + 4 \,\Re D ,\; \Im D > 0 \right\} & $\times$ & 3 & 3 & 9 & 2 & 13 & 1 & 10 & 22 & 3 & 17 & 5 & 13 & 4 & 2 & 4 & 10 & 16 \\[1pt]
 & & & \cup \; \left\{ 0<\lambda^2<\frac{1}{2} ,\; \Re D = 0 ,\; 0 < \Im D < \frac{\lambda^2}{2} \right\} & & & & & & & & & & & & & & & & & & \\[1pt]
 & & & \cup \; \left\{ \frac{1}{2}\leq\lambda^2<1 ,\; \Re D = 0 ,\; 0 < \Im D < \frac{1-\lambda^2}{2} \right\} & & & & & & & & & & & & & & & & & & \\[1pt]
 & & & \cup \; \left\{ \lambda^2>1 ,\; \Re D = 0 ,\; 0 < \Im D < \frac{\lambda^2-1}{2} ,\; \left(\Im D\right)^2 \neq \lambda^2 - 1 \right\} & & & & & & & & & & & & & & & & & & \\[1pt]
09b'$_{\text{8D}}$  & \mathfrak{h}_{ 5}   & J_3^{\left(\lambda,\, D\right)} & \left\{ \lambda = 0 ,\; \Re D = \frac{1}{2} ,\; 0 < 4\, \left(\Im D\right)^2 < 3 ,\; \Im D > 0 \right\} & \checkmark & 3 & 3 & 9 & 2 & 13 & 1 & 10 & 23 & 3 & 18 & 5 & 14 & 4 & 2 & 5 & 12 & 18 \\[1pt]
09b''$_{\text{8D}}$ & & & \cup \; \left\{ \lambda=0 ,\; \Re D \not\in \left\{0, \frac{1}{2}\right\} ,\; 1 + 4\, \Re D > 0 ,\; \Im D = 0 \right\} & $\times$ & & & & & & & & & & & & & & & & & \\[1pt]
09c$_{\text{8D}}$  & \mathfrak{h}_{ 5}   & J_3^{\left(\lambda,\, D\right)} & \lambda=0 ,\; D = \frac{1}{2} & \checkmark & 3 & 3 & 9 & 2 & 13 & 1 & 10 & 24 & 3 & 19 & 5 & 15 & 4 & 2 & 6 & 14 & 20 \\[1pt]
09d$_{\text{8D}}$  & \mathfrak{h}_{ 5}   & J_3^{\left(\lambda,\, D\right)} & \lambda^2>1 ,\; \Re D = 0 ,\; 0 < \Im D < \frac{\lambda^2-1}{2} ,\; \left(\Im D\right)^2 = \lambda^2-1 & $\times$ & 3 & 3 & 10 & 2 & 14 & 1 & 10 & 23 & 3 & 17 & 5 & 13 & 4 & 2 & 5 & 12 & 18 \\[1pt]
09e$_{\text{8D}}$  & \mathfrak{h}_{ 5}   & J_3^{\left(\lambda,\, D\right)} & \left\{ 0<\lambda^2<\frac{1}{2} ,\; D=0 \right\} & $\times$ & 3 & 4 & 9 & 3 & 14 & 1 & 11 & 22 & 3 & 17 & 5 & 13 & 4 & 2 & 6 & 14 & 20 \\[1pt]
     & & & \cup \; \left\{ \frac{1}{2}\leq\lambda^2<1 ,\; D=0 \right\} & & & & & & & & & & & & & & & & & & \\[1pt]
     & & & \cup \; \left\{ \lambda^2>1 ,\; D=0 \right\} & & & & & & & & & & & & & & & & & & \\[1pt]
09f$_{\text{8D}}$  & \mathfrak{h}_{ 5}   & J_3^{\left(\lambda,\, D\right)} & \lambda=0 ,\; D=0 & $\times$ & 3 & 4 & 9 & 3 & 14 & 1 & 11 & 23 & 3 & 18 & 5 & 14 & 4 & 2 & 7 & 16 & 22 \\
\hline
12$_{\text{8D}}$  & \mathfrak{h}_{ 8}   & J & & \checkmark & 3 & 4 & 11 & 3 & 17 & 1 & 13 & 28 & 4 & 21 & 6 & 15 & 4 & 0 & 2 & 8 & 12 \\
\bottomrule
\end{tabular}
}}
\caption{Dimensions of the Bott-Chern cohomology groups of the $8$-dimensional nilmanifolds of the type $M^6\times\T^2$ endowed with invariant complex structures and possibly with invariant pluriclosed metrics, where $M^6$ is a $6$-dimensional nilmanifold endowed with an invariant complex structure such that any invariant Hermitian metric on it is pluriclosed, and $\T^2$ is the standard $2$-dimensional torus endowed with the standard complex structure.}
\label{table:bc-skt-8dim}
\end{table}
  \end{landscape}

\FloatBarrier
\section*{Figures}

\begin{center}
\begin{figure}[h!]
{\resizebox{\textwidth}{!}{
\includegraphics[width=17.5cm]{./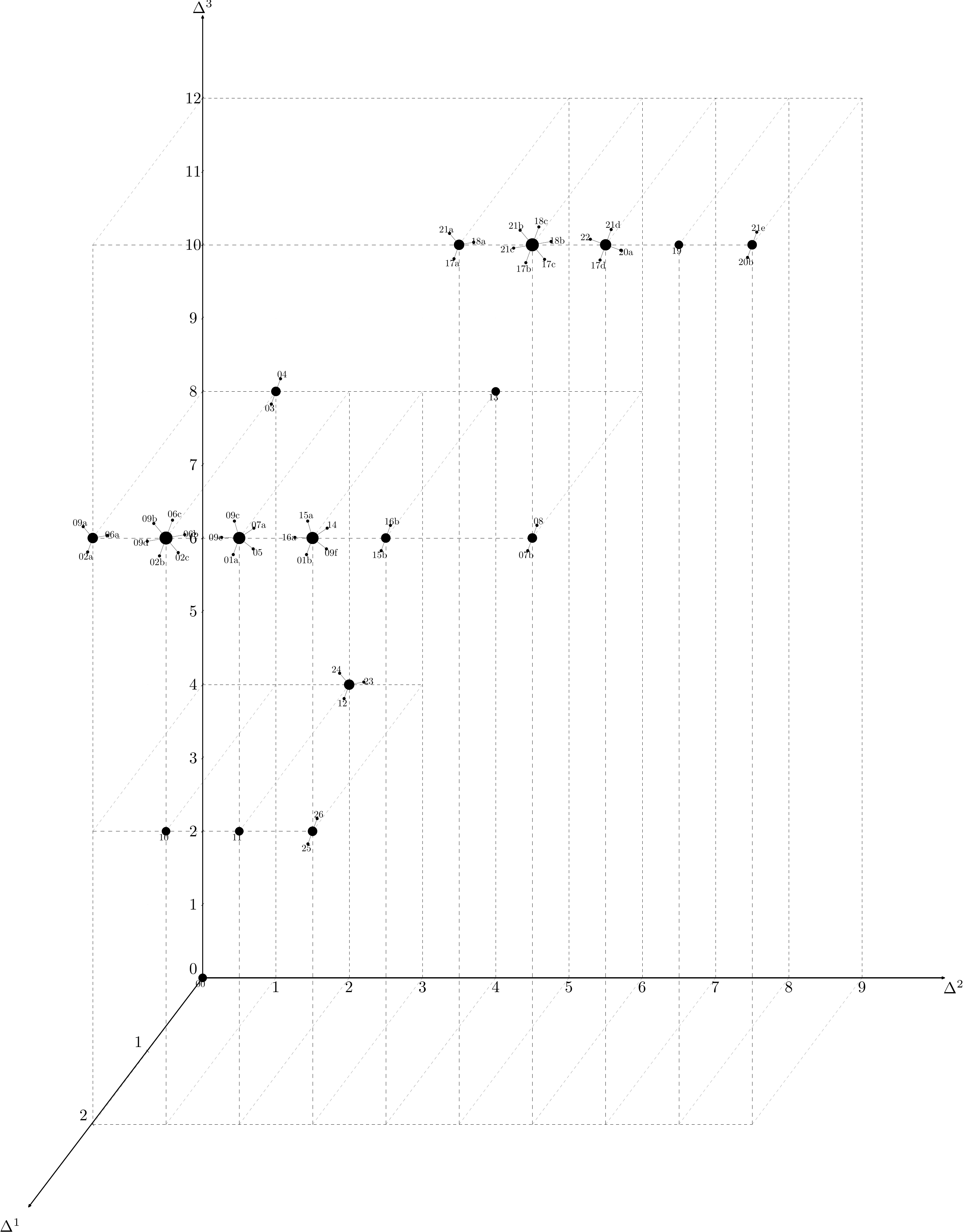}
}}
\caption{Non-K\"ahlerianity of $6$-dimensional nilmanifolds endowed with the invariant complex structures in M. Ceballos, A. Otal, L. Ugarte, and R. Villacampa's classification, \cite{ceballos-otal-ugarte-villacampa}.}
\label{fig:3d}
\end{figure}
\end{center}

\FloatBarrier

\end{document}